\def \qed {\hfill \vrule height6pt width 6pt depth 0pt}
\def\textmatrix#1&#2\\#3&#4\\{\bigl({#1 \atop #3}\ {#2 \atop #4}\bigr)}
\def\dispmatrix#1&#2\\#3&#4\\{\left({#1 \atop #3}\ {#2 \atop #4}\right)}
\newcommand{\beg}{\begin{equation}}
	\newcommand{\eeg}{\end{equation}}
\newcommand{\ben}{\begin{eqnarray*}}
	\newcommand{\een}{\end{eqnarray*}}
\newtheorem{thm}{Theorem}[section]
\newtheorem{cor}[thm]{Corollary}
\newtheorem{lem}[thm]{Lemma}
\newtheorem{prop}[thm]{Proposition}
\numberwithin{equation}{section} \theoremstyle{definition}
\newtheorem{defn}[thm]{Definition}
\newcommand{\C}{\mathbb{C}}
\newcommand{\D}{\mathbb{D}}
\newcommand{\T}{\mathbb{T}}
\newcommand{\N}{\mathbb{N}}
\newcommand{\Z}{\mathbb{Z}}
\newcommand{\HS}{\mathcal{H}}
\def\textmatrix#1&#2\\#3&#4\\{\bigl({#1 \atop #3}\ {#2 \atop #4}\bigr)}
\def\dispmatrix#1&#2\\#3&#4\\{\left({#1 \atop #3}\ {#2 \atop #4}\right)}
\title[Regular $q$-unitary dilation, Brehmer's positivity and von Neumann's inequality]{Theory of $q$-commuting contractions-II: Regular dilation, Brehmer's positivity and von Neumann's inequality}
\author[PAL, SAHASRABUDDHE AND TOMAR]{SOURAV PAL, PRAJAKTA SAHASRABUDDHE AND NITIN TOMAR}
\address[Sourav Pal]{Mathematics Department, Indian Institute of Technology Bombay,
	Powai, Mumbai - 400076, India.} \email{sourav@math.iitb.ac.in, souravmaths@gmail.com}
\address[Prajakta Sahasrabuddhe]{Mathematics Department, Indian Institute of Technology Bombay, Powai, Mumbai-400076, India.} \email{praju1093@gmail.com}
\address[Nitin Tomar]{Mathematics Department, Indian Institute of Technology Bombay, Powai, Mumbai-400076, India.} \email{tomarnitin414@gmail.com}
\keywords{$q$-commuting contractions, regular $q$-unitary dilation, Brehmer's positivity}	
\subjclass[2020]{43A35, 43A65, 47A20}	
\begin{document}
	
	\maketitle

	\begin{abstract}
		It is well-known that a commuting family of contractions possesses a regular unitary dilation if and only if it satisfies Brehmer's positivity condition. We extend this theorem to any family $\mathcal T$ of $q$-commuting contractions with $\|q\|=1$ by showing the equivalence of the following three statements: $(i)$ $\mathcal T$ admits a regular $q$-unitary dilation; $(ii)$ $\mathcal T$ satisfies Brehmer's positivity condition; $(iii)$ $\mathcal T$ admits a $Q$-unitary dilation for a family of $Q$-commuting unitaries. We achieve the first part of the result by an application of Stinespring's dilation theorem on a particular completely positive map acting on a quotient algebra of a group $C^*$-algebra, where the underlying group is a free group, and the second part is obtained by an application of Naimark's theorem. Next, we prove that $\mathcal{T}$ admits a regular $q$-unitary dilation in each of the following cases: $(i) \mathcal{T}$ consists of $q$-commuting isometries; $(ii) \mathcal{T}$ consists of doubly $q$-commuting contractions; $(iii) \mathcal{T}$ is a countable family on a Hilbert space $\mathcal H$ and $\sum_{\alpha \in \Lambda} \|T_{\alpha}h\|^2 \leq \|h\|^2$ for all $h \in \mathcal{H}$. An analogue of von Neumann's inequality is obtained for these classes of $q$-commuting contractions. Further, the main results are generalized to any family of $Q$-commuting contractions, where $Q$ consists of commuting unitaries.
	\end{abstract}

	\section{Introduction}\label{sec01}
	
	\vspace{0.2cm}
	
	\noindent Throughout the paper, all operators are bounded linear maps acting on complex Hilbert spaces. We denote by $\mathbb{C}, \mathbb{D}$ and $\mathbb{T}$ the complex plane, the unit disk and the unit circle in the complex plane, respectively with center at the origin. Given a Hilbert space $\HS$, the algebra of operators acting on $\HS$ is denoted by $\mathcal{B}(\HS)$ and the identity operator is denoted by $I_\HS$, or simply $I$ when no confusion arises. A contraction is an operator with norm at most $1$. 
		
\subsection{Motivation}	One of the most wonderful discoveries in operator theory is Bela Sz. Nagy's unitary dilation of a contraction \cite{NagyII}, which states that every contraction dilates to a unitary, i.e., given any contraction $T$ acting on a Hilbert space $\HS$, there is a Hilbert space $\mathcal K$ that contains $\HS$ as a closed linear subspace and a unitary $U$ acting on $\mathcal K$ such that 
\[
T^k=P_\HS U^k|_\HS \quad (k=0, 1, 2, \dotsc),
\]
where $P_\HS$ denotes the orthogonal projection of $\mathcal K$ onto $\HS$. The next appealing step to Bela Sz. Nagy's unitary dilation is Ando's dilation for a pair of commuting contractions \cite{Ando}, which asserts that any pair of commuting contractions $(T_1,T_2)$ acting on a Hilbert space $\HS$ admits dilation to a pair of commuting unitaries $(U_1,U_2)$ acting on a Hilbert space $\mathcal K \supseteq \HS$, i.e,
\[
T_1^{K_1}T_2^{k_2}= P_\HS U_1^{k_1}U_2^{k_2}|_\HS \quad (k_1,k_2=0, 1, 2, \dotsc).
\] 
However, Parrott \cite{Par} shows by a counter example that such a unitary dilation is not possible in general for a commuting tuple of contractions $(T_1,\dots, T_n)$ for $n\geq 3$. This leads to one of the most difficult and unsettled open problems in operator theory: what are all commuting $n$-tuples of contractions that possess unitary dilations when $n \geq 3$ ? Attempts have been made to characterize such $n$-tuples of contractions, though only a few special cases are known till date and the original problem remains unresolved, e.g., see \cite{Agler, Bal:Tim:Tre, Bal:Tre:Vin, Cur:Vas 1, Cur:Vas 2, SP1, SP2} or the classic \cite{Nagy} and the references therein. Later, Brehmer \cite{Brehmer} introduced the notion of regular unitary dilation for commuting contractions. Note that, a unitary dilation of a commuting tuple of contractions $(T_1, \dots, T_n)$ acting on $\HS$ is a commuting tuple of unitaries $(U_1, \dots , U_n)$ on a Hilbert space $\mathcal K \supseteq \HS$ satisfying
	\begin{equation} \label{eqn:new-001}
	T_1^{k_1}\dots T_n^{k_n} =P_{\HS}\, U_1^{k_1}\dots U_n^{k_n}|_{\HS}, \quad k_1, \dots , k_n \in \mathbb N \cup \{0\}.
	\end{equation}
In view of this, Brehmer's regular unitary dilation is somewhat stronger than the unitary dilation in the sense that it involves both $(T_1, \dots , T_n)$ and its adjoint $(T_1^*, \dots , T_n^*)$ in the dilation relation. For a commuting tuple of contractions $\underline{T}=(T_1, \dotsc, T_k)$ acting on a Hilbert space $\HS$ and for a tuple of positive integers $m=(m_1, \dotsc, m_k)$, the standard convention is to write $T^m=T_1^{m_1}\dotsc T_k^{m_k}$. Also, if $m=(m_1, \dotsc, m_k) \in \Z^k$, then $T(m)=(T^{m^-})^*T^{m^+}$ with $m^+=(\max\{m_1, 0\}, \dotsc, \max\{m_k, 0\})$ and $m^-=-(\min\{m_1, 0\}, \dotsc, \min\{m_k, 0\})$. The tuple $\underline{T}$ is said to have a \textit{regular unitary dilation} if there exist a Hilbert space $\mathcal{K}$ containing $\HS$ and a commuting tuple $\underline{U}=(U_1, \dotsc, U_k)$ of unitaries on $\mathcal{K}$ such that 
	\begin{equation}\label{eqn_reg}
		T(m)=P_\HS U(m)|_\HS  \quad \text{for all}  \ m \in \Z^k.
	\end{equation}
	Moreover, a commuting family of contractions $\mathcal{T}=\{T_\alpha : \alpha \in \Lambda\}$ of acting on $\HS$ is said to have a \textit{regular unitary dilation} if there exist a Hilbert space $\mathcal{K} \supseteq \HS$ and a commuting family $\mathcal{U}=\{U_\alpha: \alpha \in \Lambda\}$ of unitaries on $\mathcal{K}$ such that (\ref{eqn_reg}) holds for every finite tuple in $\mathcal{T}$. Unlike unitary dilations, there is a complete characterization due to Brehmer \cite{Brehmer} of the commuting families of contractions admitting regular unitary dilation.
		
	\begin{thm}[Brehmer, \cite{Brehmer}]\label{thm_reg_Bre}
		A  commuting family of contractions $\mathcal{T}=\{T_\alpha : \alpha \in \Lambda\}$ acting on a Hilbert space $\HS$ has a regular unitary dilation if and only if
		\begin{equation}\label{eqn_Bre}
			S(u)=	\underset{\{\alpha_1, \dotsc, \alpha_k\} \subset u}{\sum}(-1)^k(T_{\alpha_1}\dotsc T_{\alpha_k})^*(T_{\alpha_1}\dotsc T_{\alpha_k}) \geq 0 \ \ \text{for every finite subset $u$ of $\Lambda$.}
		\end{equation}
	\end{thm}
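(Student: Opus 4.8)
The plan is to reformulate ``$\mathcal{T}$ has a regular unitary dilation'' as the positive-definiteness of a single operator-valued function, and then to match that positivity with the inequalities (\ref{eqn_Bre}) by an inclusion--exclusion computation over the finite subsets of $\Lambda$. For a finite subtuple $(T_{\alpha_1},\dots,T_{\alpha_n})$ and $m\in\Z^n$ write $\Phi(m)=T(m)=(T^{m^-})^*T^{m^+}$, so that $\Phi(0)=I$ and $\Phi(-m)=\Phi(m)^*$. The first step is the equivalence: $\mathcal{T}$ admits a regular unitary dilation if and only if every such $\Phi$ is positive-definite, i.e.\ $\sum_{i,j}\langle\Phi(m_i-m_j)h_j,h_i\rangle\ge 0$ for all finite families $m_1,\dots,m_r\in\Z^n$ and $h_1,\dots,h_r\in\HS$. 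One implication is immediate: for commuting unitaries $U(m)=(U^{m^-})^*U^{m^+}=U^{m^+-m^-}=U^m$, so if $\Phi(m)=P_\HS U^m|_\HS$ then $\sum_{i,j}\langle\Phi(m_i-m_j)h_j,h_i\rangle=\big\|\sum_i U^{-m_i}h_i\big\|^2\ge 0$. The converse is a GNS/Naimark construction: on the finitely supported $\HS$-valued functions on the lattice put the form $\langle f,g\rangle=\sum_{m,m'}\langle\Phi(m-m')f(m'),g(m)\rangle$, which is positive semidefinite by hypothesis; quotient out its kernel, complete to get $\mathcal{K}\supseteq\HS$ via $h\mapsto\delta_0\otimes h$, and let $U_\alpha$ be translation by the $\alpha$-th basis vector --- a commuting family of unitaries by stationarity of the kernel, satisfying $P_\HS U^m|_\HS=\Phi(m)$. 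This step is valid for an arbitrary index set, so it remains to prove that $\Phi$ is positive-definite if and only if $S(u)\ge 0$ for every finite $u\subseteq\Lambda$; since positive-definiteness involves only finitely many indices at a time, I may assume $\Lambda=\{1,\dots,n\}$.

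For ``positive-definite $\Rightarrow$ Brehmer'', fix a finite $u$ and $x\in\HS$, and apply the positive-definiteness inequality at the points $\{e_v:v\subseteq u\}$ (characteristic vectors of subsets of $u$) with the vectors $h_{e_v}=(-1)^{|v|}T^v x$, where $T^v=\prod_{\alpha\in v}T_\alpha$. Using $\Phi(e_v-e_w)=(T^{w\setminus v})^*T^{v\setminus w}$ and $T^{v\setminus w}T^w=T^{v\cup w}$ (a disjoint union of commuting factors), each summand equals $(-1)^{|v|+|w|}\|T^{v\cup w}x\|^2$; grouping by $s=v\cup w$ and using $\sum_{v\cup w=s}(-1)^{|v|+|w|}=(-1)^{|s|}$ collapses the whole sum to $\sum_{s\subseteq u}(-1)^{|s|}\|T^s x\|^2=\langle S(u)x,x\rangle$, which is therefore $\ge 0$.

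For the reverse direction, ``Brehmer $\Rightarrow$ positive-definite'', I would first reduce the test points to $\N^n$ (translating all the $m_i$ by a common large vector leaves every difference $m_i-m_j$, hence the entire Gram matrix, unchanged), so that $\Phi(m_i-m_j)=(T^{\,m_j-m_i\wedge m_j})^*T^{\,m_i-m_i\wedge m_j}$. The goal is to exhibit $\sum_{i,j}\langle\Phi(m_i-m_j)h_j,h_i\rangle$ as $\sum_{u}\langle S(u)k_u,k_u\rangle$ plus manifestly nonnegative ``square'' terms, where the vectors $k_u$ and the squares are produced by a Cholesky/Gaussian-elimination procedure on the operator Gram matrix, with the $S(u)$ appearing exactly as the successive pivots; Brehmer's positivity is then precisely the statement that all the pivots are $\ge 0$. (For $n=1$, two points $0,1$ already give $\sum_{i,j}\langle\Phi(m_i-m_j)h_j,h_i\rangle=\|Th_0+h_1\|^2+\langle(I-T^*T)h_0,h_0\rangle$, with $S(\{1\})=I-T^*T$ as the pivot.)

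I expect this last decomposition to be the main obstacle: organizing the elimination over the subset lattice and keeping the signs straight is delicate precisely because $T_\alpha$ and $T_\beta^*$ do not commute. A possible alternative is induction on $n$: the subtuple $(T_1,\dots,T_{n-1})$ inherits Brehmer's positivity, hence by induction a regular unitary dilation, and one then adjoins a unitary $U_n$ commuting with $U_1,\dots,U_{n-1}$ via a one-variable Sz.-Nagy-type extension, the Brehmer inequalities for the subsets containing $n$ supplying exactly the positivity needed to carry it out; here the subtle point is to check that all the regularity identities (\ref{eqn_reg}) survive the extension.
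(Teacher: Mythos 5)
Your framework is the right one and coincides with the paper's: regular unitary dilation is equivalent, via Naimark's theorem (Theorem \ref{NaimarkI}), to positive definiteness of $m\mapsto T(m)=(T^{m^-})^*T^{m^+}$ on the group of finitely supported integer tuples, and your necessity computation (test points $e_v$, $v\subseteq u$, vectors $(-1)^{|v|}T^vx$, and the identity $\sum_{v\cup w=s}(-1)^{|v|+|w|}=(-1)^{|s|}$) is correct and is essentially the same cancellation the paper exploits. The genuine gap is the other direction, which is the entire content of the theorem: you never prove that Brehmer's positivity implies positive definiteness of $T(\cdot)$. What you offer is a stated goal ("exhibit the Gram form as $\sum_u\langle S(u)k_u,k_u\rangle$ plus squares via a Cholesky/elimination over the subset lattice, with the $S(u)$ as pivots"), followed by your own admission that this decomposition is the main obstacle; an intention to eliminate is not an argument, and it is not clear that such a pivot decomposition with exactly the operators $S(u)$ exists, since $T_\alpha$ and $T_\beta^*$ do not commute.

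The classical argument, recalled in the paper's Remark \ref{remI}(iv)--(vi) and carried out in detail in the $q$-setting in Sections \ref{DCI} and \ref{sec09}, does something different and more explicit: after reducing to test functions $h(n)$, $n\ge 0$, one performs the invertible substitution $g(n)=\sum_{m\ge n}T^{m-n}h(m)$ with inverse $h(n)=\sum_{v\subseteq\Omega}(-1)^{|v|}T^{e(v)}g(n+e(v))$, under which the quadratic form becomes exactly $\sum_{p\ge 0}\langle D(p,p)g(p),g(p)\rangle$ with $D(p,p)=\sum_{v\subseteq\pi(p)}(-1)^{|v|}(T^{e(v)})^*T^{e(v)}=S(\pi(p))$; no residual ``square'' terms appear, but one must prove the nontrivial vanishing $D(p,r)=0$ for $p\ne r$ (the $\delta(w)$ argument), which your sketch does not address at all. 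Your fallback of inducting on $n$ and adjoining one commuting unitary at a time is also not a proof: preserving all the regularity identities (\ref{eqn_reg}) under such an extension is precisely the difficulty, and the Brehmer inequalities for subsets containing $n$ do not obviously reduce to a one-variable Sz.-Nagy extension problem. To complete your proposal you would need to either carry out the elimination you describe and verify the pivots really are the $S(u)$, or import the substitution argument above.
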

The condition in (\ref{eqn_Bre}) is known as the \textit{Brehmer's positivity}. Later, Halperin \cite{Halp} provides an alternative proof to Brehmer's famous result. An application of Brehmer's theorem and a fine observation due to Attele and Lubin (see Proposition 2 in \cite{Attele}) provide the following classes of contractions possessing regular unitary dilation by satisfying Brehmer's positivity condition.
	
	\begin{prop}[\cite{Nagy}, CH-I, Proposition 9.2 \& \cite{Attele}, Proposition 2 ]\label{prop111}
		A commuting family of contractions $\mathcal{T}=\{T_\alpha : \alpha \in \Lambda\}$ acting on $\HS$ possesses a regular unitary dilation in each of the cases: 
		\begin{enumerate}
			\item[(i)] $\mathcal{T}$ consists of isometries;
			\item[(ii)] $\mathcal{T}$ consists of doubly commuting contractions;
			\item[(iii)] $\mathcal T$ is a countable family and $\sum_{\alpha \in \Lambda} \|T_{\alpha}h\|^2 \leq \|h\|^2$ for all $h \in \HS$.
		\end{enumerate} 
	\end{prop}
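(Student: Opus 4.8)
The plan is to deduce Proposition~\ref{prop111} from Theorem~\ref{thm_reg_Bre} by verifying Brehmer's positivity condition \eqref{eqn_Bre} in each of the three cases. For a finite subset $u=\{\alpha_1,\dots,\alpha_n\}$ of $\Lambda$ it is convenient to record the observation that the operator $S(u)$ factors as a product over the commuting self-adjoint operators $I-T_{\alpha_i}^*T_{\alpha_i}$ when things are nice enough; more precisely, since the $T_\alpha$ commute, one has the formal identity
\[
S(u)=\sum_{v\subseteq u}(-1)^{|v|}\Bigl(\prod_{\alpha\in v}T_\alpha\Bigr)^{\!*}\Bigl(\prod_{\alpha\in v}T_\alpha\Bigr),
\]
and this is the central object to show is positive.

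For case (i), each $T_\alpha$ is an isometry, so $T_\alpha^*T_\alpha=I$; I would argue that the partial sums telescope to $0$ — indeed, grouping the terms of $S(u)$ according to whether a fixed index $\alpha_n\in u$ is present or absent gives $S(u)=S(u\setminus\{\alpha_n\})-\bigl(\prod_{\alpha\in u\setminus\{\alpha_n\}}T_\alpha\bigr)^*(I-T_{\alpha_n}^*T_{\alpha_n})\bigl(\prod_{\alpha\in u\setminus\{\alpha_n\}}T_\alpha\bigr)$ after reindexing, which collapses to $S(u\setminus\{\alpha_n\})$ and hence inductively to $I\ge 0$ for $|u|=0$, forcing $S(u)\ge 0$ for all $u$ — in fact $S(u)=0$ once $u\neq\emptyset$. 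For case (ii), doubly commuting means $T_\alpha T_\beta^*=T_\beta^*T_\alpha$ for $\alpha\neq\beta$ (the unitary scalars being trivial in the plain commuting setting here); this is exactly what is needed so that the $T_\alpha^*T_\alpha$ behave like independent variables, and I would push the above grouping argument all the way through to obtain the product formula
\[
S(u)=\prod_{\alpha\in u}\bigl(I-T_\alpha^*T_\alpha\bigr),
\]
a product of commuting positive operators (the factors commute precisely because of double commutativity), hence positive. For case (iii), with $\sum_{\alpha\in\Lambda}\|T_\alpha\|^2\le 1$, I would estimate $S(u)$ directly: writing $D_\alpha=I-T_\alpha^*T_\alpha\ge 0$, expand $S(u)=\sum_{v\subseteq u}(-1)^{|v|}T_v^*T_v$ and bound the negative contributions. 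The cleanest route is to show that for any $h\in\HS$ and any finite $u$,
\[
\langle S(u)h,h\rangle \ge \|h\|^2-\sum_{\alpha\in u}\|T_\alpha h\|^2 \ \text{-type estimate},
\]
more carefully: one uses that $\bigl\|\prod_{\alpha\in v}T_\alpha h\bigr\|^2\le \prod_{\alpha\in v}\|T_\alpha\|^2\,\|h\|^2$ is too lossy, so instead I would invoke the standard fact that the alternating sum $S(u)$ dominates $I-\sum_{\alpha\in u}T_\alpha^*T_\alpha$ in the positive cone — this follows by induction on $|u|$ using $S(u)=S(u')-T_{u'}^*D_{\alpha_n}^{\perp}T_{u'}$-style recursions together with $0\le D_\alpha^\perp:=T_\alpha^*T_\alpha\le \|T_\alpha\|^2 I$ — and then $I-\sum_{\alpha\in u}T_\alpha^*T_\alpha\ge I-\sum_{\alpha\in u}\|T_\alpha\|^2 I\ge 0$ by hypothesis.

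In all three cases, once $S(u)\ge 0$ for every finite $u\subseteq\Lambda$ is established, Theorem~\ref{thm_reg_Bre} immediately yields the regular unitary dilation, completing the proof. I expect the main obstacle to be case (iii): getting the right combinatorial inequality relating the full alternating sum $S(u)$ to the simple bound $I-\sum_{\alpha\in u}T_\alpha^*T_\alpha$, since a naive term-by-term estimate loses too much. The resolution is to set up the induction so that at each step one subtracts a single conjugated copy of $T_{\alpha}^*T_\alpha$ (which is bounded by $\|T_\alpha\|^2$) rather than expanding all $2^{|u|}$ terms at once; cases (i) and (ii) are then the degenerate/exact specializations of the same recursion and should be essentially immediate.
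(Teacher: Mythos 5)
Your overall strategy (verify Brehmer's positivity for every finite $u$ and invoke Theorem \ref{thm_reg_Bre}) is exactly the route the paper takes for its $q$-analogues, and cases (i) and (ii) end up at the right facts: for (ii) the product formula $S(u)=\prod_{\alpha\in u}(I-T_\alpha^*T_\alpha)$ with commuting nonnegative factors is precisely the paper's argument (Proposition \ref{Brehmer's positivity} with $q_{ij}=1$), and for (i) the cleanest justification is simply that a product of commuting isometries is an isometry, so writing $T_v=\prod_{\alpha\in v}T_\alpha$ every term $T_v^*T_v$ equals $I$ and $S(u)=\bigl(\sum_{v\subseteq u}(-1)^{|v|}\bigr)I\ge 0$ (note your own derivation in (i) is internally inconsistent: if $S(u)$ ``collapses to $S(u\setminus\{\alpha_n\})$'' at every step it would equal $I$, not $0$; the conclusion $S(u)\ge0$ is nevertheless correct).

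The genuine gap is the recursion you lean on, which is false, and it is load-bearing in case (iii). Grouping the subsets of $u$ by whether $\alpha_n$ occurs gives $S(u)=S(u')-T_{\alpha_n}^*\,S(u')\,T_{\alpha_n}$ with $u'=u\setminus\{\alpha_n\}$, \emph{not} $S(u)=S(u')-T_{u'}^*(I-T_{\alpha_n}^*T_{\alpha_n})T_{u'}$; for instance with $u=\{1,2\}$ and $T_1=0$ your identity gives $I$ while $S(u)=I-T_2^*T_2$. Consequently the induction you sketch does not establish the dominance $S(u)\ge I-\sum_{\alpha\in u}T_\alpha^*T_\alpha$, and you offer no other proof of that ``standard fact'' (as stated, independent of the norm hypothesis, it is unsubstantiated). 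Two correct repairs exist. The paper's (Lemma \ref{lem908}, specialized to $q_{ij}=1$): set $a_p(h)=\sum_{|v|=p}\|T_vh\|^2$ and use $\sum_{\alpha}\|T_\alpha\|^2\le1$ to get $a_p(h)\le a_{p-1}(h)$ for every $p\ge1$, whence pairing the tail gives $\langle S(u)h,h\rangle=\sum_p(-1)^pa_p(h)\ge a_0(h)-a_1(h)=\|h\|^2-\sum_{\alpha\in u}\|T_\alpha h\|^2\ge\bigl(1-\sum_\alpha\|T_\alpha\|^2\bigr)\|h\|^2\ge0$. Alternatively, if you insist on induction, use the correct recursion and carry the stronger inductive statement $0\le S(u')\le I$ together with $S(u')\ge I-\sum_{\alpha\in u'}T_\alpha^*T_\alpha$: then $0\le T_{\alpha_n}^*S(u')T_{\alpha_n}\le T_{\alpha_n}^*T_{\alpha_n}$ yields $S(u)\le S(u')\le I$ and $S(u)\ge I-\sum_{\alpha\in u}T_\alpha^*T_\alpha\ge\bigl(1-\sum_\alpha\|T_\alpha\|^2\bigr)I\ge0$; note the hypothesis is used at every stage to keep $S(u')\ge0$, not only at the last step. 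With case (iii) repaired in one of these ways, the deduction from Theorem \ref{thm_reg_Bre} is fine.
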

Evidently, regular unitary dilation implies unitary dilation and thus commuting tuple $(T_1, \dots, T_n)$ having regular unitary dilation must satisfy
$
	p(T_1, \dotsc, T_k)=P_\HS p(U_1, \dotsc, U_k)|_\HS
	$ 
	for every $p \in \C[z_1, \dotsc, z_k]$. This gives an immediate von Neumann's inequality on the closed polydisc $\overline{\D}^n$ for such a tuple, i.e.,
	\[
		\|p(T_1, \dotsc, T_k)\| \leq \|p\|_{\infty, \T^k}
		\]
		for every polynomial $p$ in $\C[z_1, \dotsc, z_k]$. Hence, von Neumann's inequality holds for each of the classes described in Proposition \ref{prop111}.
		
\subsection{The main results of the paper} In this article, we generalize Theorem \ref{thm_reg_Bre} and Proposition \ref{prop111} to a $q$-commuting family of contractions. Consequently, we obtain a von Neumann type inequality for such a $q$-commuting family. A pair of operators $T_1,T_2$ on a Hilbert space $\HS$ is said to be $q$-\textit{commuting} for a scalar $q$ if $T_1T_2=qT_2T_1$. The definition of more general $q$-commuting family of operators is given below.

\begin{defn}
		A family of operators $\mathcal{T}=\{T_\alpha: \alpha \in \Lambda \}$ acting on a Hilbert space $\HS$ is said to be \textit{$q$-commuting} for a family of non-zero complex scalars $q=\{q_{\alpha \beta} : q_{\alpha\beta}=q_{\beta \alpha}^{-1}, \  \alpha, \beta \in \Lambda, \  \alpha \ne \beta \ \}$ if $T_\alpha T_\beta =q_{\alpha \beta}T_\beta T_\alpha$ for all $\alpha, \beta$ in $\Lambda$ with $\alpha \ne \beta$. If each $q_{\alpha \beta}$ is unimodular, then $\mathcal{T}$ is said to be a $q$-commuting family with $\|q\|=1$. Moreover, $\mathcal T$ is said be \textit{doubly $q$-commuting} if $T_\alpha T_\beta =q_{\alpha \beta}T_\beta T_\alpha$ and $T_\alpha T_\beta^*=\overline{q}_{\alpha \beta}T_\beta^*T_\alpha$ for all $\alpha, \beta$ in $\Lambda$ with $\alpha \ne \beta$.
	\end{defn}	 
	
A broader framework is provided by $Q$-commuting and doubly $Q$-commuting contractions, which generalize $q$-commuting contractions with $\|q\|=1$ and doubly $q$-commuting contractions respectively. 
	
	\begin{defn}
		A family $\{T_\alpha: \alpha \in \Lambda\}$ of operators on a Hilbert space $\HS$ is said be  \textit{$Q$-commuting} for a family of commuting unitaries $Q=\{Q_{\alpha \beta} \in \mathcal{B}(\HS) :  Q_{\alpha \beta}=Q_{\beta \alpha}^*,  \ \alpha \ne \beta \ \text{in} \ \Lambda \}$ if $T_\alpha T_\beta =Q_{\alpha \beta}T_\beta T_\alpha$ and $T_kQ_{\alpha \beta}=Q_{\alpha \beta}T_k$ for all $\alpha, \beta, k$ in $\Lambda$ with $\alpha \ne \beta$. In addition, if $T_\alpha T_\beta^*=Q_{\alpha \beta}^*T_\beta^*T_\alpha$, then $\mathcal{T}$ is said to be \textit{doubly $Q$-commuting}.
	\end{defn} 		
	
Dilation and lifting of $q$-commuting operators in two or more variables are well-studied, e.g., see \cite{Ball, Barik, Bis:Pal:Sah, Dey, DeyI, Jeu, K.M., Sebestyen} and references therein. Notably in \cite{Sebestyen}, Sebesty\'{e}n proved that an anticommuting pair of contractions (i.e., when $q=-1$) on a Hilbert space admits a dilation to an anticommuting pair of unitaries. Keshari and Mallick \cite{K.M.} generalized this result to any $q$-commuting pair of contractions with $\|q\|=1$. 	In the multivariable setting, it was proved in \cite{Barik} (see Theorem 3.10 in \cite{Barik}) that any $q$-commuting tuple $\underline{T}=(T_1, \dotsc, T_n)$ of contractions with $\|q\|=1$ admits a dilation to $q$-commuting $n$-tuple of isometries if $\underline{T}$ satisfies
	\begin{equation}\label{Szego}
		\underset{\{\alpha_1, \dotsc, \alpha_k\} \subset u}{\sum}(-1)^k(T_{\alpha_1}\dotsc T_{\alpha_k})(T_{\alpha_1}\dotsc T_{\alpha_k})^* \geq 0 \quad \text{for every $u \subseteq \{1, ,\dotsc, n\}$}.	
	\end{equation}
We generalize this result from \cite{Barik} in Corollary \ref{cor_403} and prove that a $q$-commuting family of contractions $\mathcal{T}=\{T_\alpha : \alpha \in \Lambda\}$ with $\|q\|=1$ satisfying (\ref{Szego}) for every finite subset $u$ of $\Lambda$ can be dilated to a $q$-commuting family of unitaries. Also, it was proved in \cite{Ball} that a $q$-commuting tuple of isometries with $\|q\|=1$ admits an extension to a $q$-commuting tuple of unitaries. Here we generalize this in Corollary \ref{cor_q_iso_ext} by proving that any $q$-commuting family of isometries with $\|q\|=1$ admits an extension to a $q$-commuting family of unitaries with $\|q\|=1$. Thus, the problem of finding a $q$-unitary dilation of a $q$-commuting family of contractions can be resolved by obtaining a $q$-isometric dilation for the family. First we define regular unitary dilation for a $q$-commuting family of contractions in the following canonical way.
\begin{defn}\label{defn_reg_q}
		Let $\mathcal{T}=\{T_\alpha : \alpha \in \Lambda\}$ be a $q$-commuting family of contractions  with $\|q\|=1$ acting on a Hilbert space $\mathcal{H}$, where $(\Lambda, \preceq)$ is a well-ordered set. We say that $\mathcal{T}$ admits a \textit{regular $Q$-unitary dilation} if there exist a Hilbert space $\mathcal
		{K} \supseteq \HS$ and a $Q$-commuting family $\mathcal{U}=\{U_\alpha : \alpha\in \Lambda\}$ of unitaries acting on $\mathcal{K}$ such that $Q_{\alpha \beta}|_{\HS}=q_{\alpha \beta}I_\HS$ for all $\alpha, \beta \in \Lambda$ with $\alpha \ne \beta$ and 
		\begin{align}\label{eqn**}
			\underset{1\leq i<j \leq k}\prod q_{\alpha_{i}\alpha_{j}}^{-m_{\alpha_{i}}^+m_{\alpha_{j}}^-}\left[(T_{\alpha_{1}}^{m_{\alpha_{1}}^-})^*\dotsc (T_{\alpha_{k}}^{m_{\alpha_{k}}^-})^*\right]\left[T_{\alpha_{1}}^{m_{\alpha_{1}}^+}\dotsc T_{\alpha_{k}}^{m_{\alpha_{k}}^+}\right]=P_\mathcal{H}U_{\alpha_{1}}^{m_{\alpha_{1}}}\dotsc U_{\alpha_{k}}^{m_{\alpha_{k}}}|_\mathcal{H}
		\end{align}
		for $m_{\alpha_1}, \dotsc, m_{\alpha_k} \in \mathbb{Z}$ and $\alpha_1, \dotsc, \alpha_k \in \Lambda$ with $\alpha_1 \preceq \dotsc \preceq \alpha_k$.	In addition, if the family 
		$
		Q=\left\{\widetilde{q}_{\alpha \beta}I_{\mathcal{K}} :\widetilde{q}_{\alpha \beta} \in \widetilde{q} \right\}
		$ for  
		$
		\widetilde{q}=\left\{\widetilde{q}_{\alpha \beta} \in \T \ : \ \widetilde{q}_{\alpha\beta}=\widetilde{q}_{\beta \alpha}^{-1} , \ \alpha \ne \beta \ \ \text{and} \ \ \alpha, \beta \in \Lambda \right\}
		$,
		then we say that $\mathcal{T}$ has a \textit{regular $\widetilde{q}$-unitary} dilation. 
	\end{defn}
	
In Section \ref{sec09},	we further discuss the motivation behind the above definition. The assumption that $\Lambda$ is well-ordered in the above definition may seem redundant. However, the purpose of mentioning an order on $\Lambda$ is to emphasize on the order of operators $T_{\alpha_1}, \dotsc, T_{\alpha_k}$ and $U_{\alpha_1}, \dotsc, U_{\alpha_k}$ appearing in \eqref{eqn**}. Putting an order on $\Lambda$ is crucial since we are dealing with operators in a non-commutative setting. Also, if the families $\mathcal{T}$ and $\mathcal{U}$ as in Definition \ref{defn_reg_q} follow the same $q$-intertwining relations, then \eqref{eqn**} is independent of the order in which $\alpha_1, \dotsc, \alpha_k$ appear. In this case, the definition coincides with that of a regular $q$-unitary dilation of a $q$-commuting family of contractions which is given below.
\begin{defn}\label{defn_reg_qI} 
		A $q$-commuting family $\mathcal{T}=\{T_\alpha : \alpha \in \Lambda\}$ of contractions with $\|q\|=1$ is said to have a \textit{regular $q$-unitary dilation} if there exist a Hilbert space $\mathcal
		{K} \supseteq \mathcal{H}$ and a $q$-commuting family $\mathcal{U}=\{U_\alpha : \alpha \in \Lambda \}$ of unitaries acting on $\mathcal{K}$ such that
		\[
		\underset{1\leq i<j \leq k}\prod q_{\alpha_{i}\alpha_{j}}^{-m_{\alpha_{i}}^+m_{\alpha_{j}}^-}\left[(T_{\alpha_{1}}^{m_{\alpha_{1}}^-})^*\dotsc (T_{\alpha_{k}}^{m_{\alpha_{k}}^-})^*\right]\left[T_{\alpha_{1}}^{m_{\alpha_{1}}^+}\dotsc T_{\alpha_{k}}^{m_{\alpha_{k}}^+}\right]=P_\mathcal{H}U_{\alpha_{1}}^{m_{\alpha_{1}}}\dotsc U_{\alpha_{k}}^{m_{\alpha_{k}}}|_\mathcal{H}
		\]
		for every $m_{\alpha_1}, \dotsc, m_{\alpha_k} \in \mathbb{Z}$ and $\alpha_1, \dotsc, \alpha_k$ in $\Lambda$. 
	\end{defn}
	
 The following is the first main result of this article which generalizes Theorem \ref{thm_reg_Bre}. 
	
	\begin{thm}\label{thm912}
		Let $\mathcal{T}=\{T_\alpha : \alpha \in \Lambda\}$ be a $q$-commuting family of contractions  with $\|q\|=1$ acting on a Hilbert space $\HS$. Then the following are equivalent:
\begin{enumerate}
\item $\mathcal{T}$ admits a regular $q$-unitary dilation ;

\item $\mathcal T$ satisfies the Brehmer's positivity condition, i.e.,
\[
		S(u)= \underset{\{\alpha_1, \dotsc, \alpha_m\} \subset u}{\sum}(-1)^m(T_{\alpha_1}\dotsc T_{\alpha_m})^*(T_{\alpha_1}\dotsc T_{\alpha_m}) \geq 0
		\] 
		for every finite subset $u$ of $\Lambda$ ;
		
		\item $\mathcal T$ admits a regular $Q$-unitary dilation for a $Q$-commuting family of unitaries $\mathcal{U}=\{U_\alpha : \alpha\in \Lambda\}$.
\end{enumerate}		
	\end{thm}
	
The equivalence of $(1)$ and $(2)$ of this theorem is obtained by an application of Stinespring's dilation theorem on a particular completely positive map acting on a quotient algebra of a group $C^*$-algebra, where the underlying group is a free group. Also, $(2) \Leftrightarrow (3)$ is proved using Naimark's theorem. Next, we have the following theorem which is another main result of this article.
	
	\begin{thm}\label{dilation of DCV}
		Let $\mathcal{T}=\{T_\alpha : \alpha \in \Lambda\}$ be a $q$-commuting family of contractions  with $\|q\|=1$ on a Hilbert space $\HS$. Then $\mathcal{T}$ admits a regular $q$-unitary dilation in each of the following cases:
		\begin{enumerate}
			\item $\mathcal{T}$ consists of isometries;
			\item $\mathcal{T}$ consists of doubly $q$-commuting contractions;
			\item $\mathcal{T}$ is a countable family and $\sum_{\alpha \in \Lambda} \|T_{\alpha}h\|^2 \leq \|h\|^2$ for all $h \in \HS$. 
		\end{enumerate}
	\end{thm}
The proofs are long and involve detailed computational steps. To make the algorithm transparent to the readers, we first prove Theorem \ref{thm912} and Theorem \ref{dilation of DCV} in Section \ref{sec09} for a finite family of $q$-commuting contractions. The proofs of the finite case pave the way to establish the general case in Section \ref{sec10}. We also obtain a von Neumann-type inequality for $q$-commuting contractions with $\|q\|=1$. However, in this case, one cannot choose the polynomial algebra since we do not have commutativity conditions anymore. The key step here is to identify the algebra of functions that can replace the polynomial algebra. Taking cue from the proof of Theorem \ref{regular q-unitary dilationII}, we achieve the desired inequality in Theorem \ref{thm_vNIII}. Further, we show that Theorems \ref{thm912} and \ref{dilation of DCV} hold in a more general setting of $Q$-commuting family, where $Q$ consists of commuting unitaries.
	
\section{Regular $q$-unitary dilation: The finite case}\label{sec09}
	
	\noindent In this Section, we prove Theorem \ref{thm912} and Theorem \ref{dilation of DCV} for a finite family of $q$-commuting contractions with $\|q\|=1$. Consequently, we have a von Neumann type inequality. Recall that a commuting tuple $(T_1, \dotsc, T_k)$ of contractions acting on a Hilbert space $\mathcal{H}$ is said to have a regular unitary dilation if there is a commuting tuple $(U_1, \dotsc, U_k)$ of unitaries acting on some Hilbert space $\mathcal{K}$ containing $\mathcal{H}$ such that
	\[
	\left[(T_1^{n_1^-})^*\dotsc (T_k^{n_k^-})^*\right]\left[T_1^{n_1^+} \dotsc T_k^{n_k^+}\right]=P_\mathcal{H}U_1^{n_1}\dotsc U_k^{n_k}|_\mathcal{H} \quad (n_1, \dotsc, n_k \in \mathbb{Z})
	\]
	or equivalently, 				
	\begin{align}\label{eqn401}
		\left[(T_1^{n_1^-})^*\dotsc (T_k^{n_k^-})^*\right]\left[T_1^{n_1^+} \dotsc T_k^{n_k^+}\right]=P_\mathcal{H}\left[(U_1^{n_1^-})^*\dotsc (U_k^{n_k^-})^*\right]\left[U_1^{n_1^+} \dotsc U_k^{n_k^+}\right]\bigg|_\mathcal{H} \quad (n_1, \dotsc, n_k \in \mathbb{Z}).
	\end{align}
Suppose $\underline{T}=(T_1, \dotsc, T_k)$ and $\underline{U}=(U_1, \dotsc, U_k)$ are $q$-commuting tuples of contractions and unitaries, respectively, with $\|q\|=1$. Evidently, the tuple $\underline{U}$ is doubly $q$-commuting. So, we have
	\begin{align} \label{eqn402}
		(U_1^{n_1^-})^*\dotsc (U_k^{n_k^-})^*U_1^{n_1^+} \dotsc U_k^{n_k^+}= \underset{1\leq i<j \leq k}\prod q_{ij}^{{n_i}^+n_{j}^-}(U_1^{n_1^-})^*U_1^{n_1^+} \dotsc (U_k^{n_k^-})^*U_k^{n_k^+}=\underset{1\leq i<j \leq k}\prod q_{ij}^{{n_i}^+n_{j}^-}U_1^{n_1} \dotsc U_k^{n_k}
	\end{align}
	for every $n_1, \dotsc, n_k$ in $\Z$. So, if \eqref{eqn401} holds for $\underline{T}$ and $\underline{U}$, then we have
	\[
	\underset{1\leq i<j \leq k}\prod q_{ij}^{-{n_i}^+n_{j}^-}\left[(T_1^{n_1^-})^*\dotsc (T_k^{n_k^-})^*\right]\left[T_1^{n_1^+} \dotsc T_k^{n_k^+}\right]=P_\mathcal{H}U_1^{n_1} \dotsc U_k^{n_k}|_\mathcal{H} \quad (n_1, \dotsc, n_k \in \mathbb{Z}).
	\]
This is the motivation behind Definition \ref{defn_reg_q}. As we have mentioned in the `Introduction' that if the families $\mathcal{T}$ and $\mathcal{U}$ as in Definition \ref{defn_reg_q} follow the same $q$-intertwining relations, then regular $q$-unitary dilation of a $q$-commuting family as in Definition \ref{defn_reg_q} coincides with that in Definition \ref{defn_reg_qI}. To see this, let $\mathcal{T}=\{T_\alpha : \alpha \in \Lambda\}$ and $\mathcal{U}=\{U_\alpha: \alpha \in \Lambda\}$ be $q$-commuting families of contractions and unitaries, respectively, with $\|q\|=1$ and both having the same $q$-intertwining relations. Clearly, $(U_{\alpha_1}, \dotsc, U_{\alpha_k})$ is a doubly $q_{\alpha}$-commuting tuple for $q_{\alpha}=\{q_{\alpha_i\alpha_j} : 1 \leq i < j \leq k \}$.  From (\ref{eqn402}), we have that 
	\[
	U_{\alpha_{1}}^{m_{\alpha_{1}}}\dotsc U_{\alpha_{k}}^{m_{\alpha_{k}}}=\underset{1\leq i<j \leq k}\prod q_{\alpha_{i}\alpha_{j}}^{-m_{\alpha_{i}}^+m_{\alpha_{j}}^-}\left[(U_{\alpha_{1}}^{m_{\alpha_{1}}^-})^*\dotsc (U_{\alpha_{k}}^{m_{\alpha_{k}}^-})^*\right]\left[U_{\alpha_{1}}^{m_{\alpha_{1}}^+}\dotsc U_{\alpha_{k}}^{m_{\alpha_{k}}^+}\right].
	\]
	Therefore, (\ref{eqn**}) is equivalent to the following:
	\[
	\left[(T_{\alpha_{1}}^{m_{\alpha_{1}}^-})^*\dotsc (T_{\alpha_{k}}^{m_{\alpha_{k}}^-})^*\right]\left[T_{\alpha_{1}}^{m_{\alpha_{1}}^+}\dotsc T_{\alpha_{k}}^{m_{\alpha_{k}}^+}\right]=P_\mathcal{H}\left[(U_{\alpha_{1}}^{m_{\alpha_{1}}^-})^*\dotsc (U_{\alpha_{k}}^{m_{\alpha_{k}}^-})^*\right]\left[U_{\alpha_{1}}^{m_{\alpha_{1}}^+}\dotsc U_{\alpha_{k}}^{m_{\alpha_{k}}^+}\right]\bigg|_\mathcal{H}.
	\]
	Consequently, for any permutation $\sigma$ on $\{\alpha_1, \dotsc, \alpha_k\}$, we have   
	\begin{equation}\label{eqn923}
		\underset{1\leq i<j \leq k}\prod q_{\beta_{i}\beta_{j}}^{-m_{\beta_{i}}^+m_{\beta_{j}}^-}\left[(T_{\beta_{1}}^{m_{\beta_{1}}^-})^*\dotsc (T_{\beta_{k}}^{m_{\beta_{k}}^-})^*\right]\left[T_{\beta_{1}}^{m_{\beta_{1}}^+}\dotsc T_{\beta_{k}}^{m_{\beta_{k}}^+}\right]=P_\mathcal{H}U_{\beta_{1}}^{m_{\beta_{1}}}\dotsc U_{\beta_{k}}^{m_{\beta_{k}}}|_\mathcal{H},
	\end{equation}
	where $\beta_j=\sigma(\alpha_j)$ for $1 \leq j \leq k$. Indeed, \eqref{eqn923} holds precisely because both $(T_{\alpha_1}, \dotsc, T_{\alpha_k})$ and $(U_{\alpha_1}, \dotsc, U_{\alpha_k})$ follow the same $q$-commutativity relations among themselves.	
	
	\smallskip
	
	From here onwards, we always assume some well-ordering on an indexing set $\Lambda$. Furthermore, if $\Lambda$ is a subset of the natural numbers, we always consider the natural order on the set $\Lambda$. Suppose $(T_1, \dotsc, T_k)$ is a $q$-commuting tuple of contractions acting on a Hilbert space $\mathcal{H}$ such that $q=\{q_{ij} \in \T \ : \  1 \leq i < j \leq k \}$. Let $(s_1, \dotsc, s_k)$ be a system of indeterminates corresponding to $(T_1, \dotsc, T_k).$ We need another system of indeterminates corresponding to each $q_{ij}$ and we shall denote them by $q_{ij}$ as well to avoid unnecessary symbols. Let $G_{dc}$ be the collection of the elements of the form 
	\begin{equation}\label{eqn8.03}
		\underset{1\leq i < j \leq k}{\prod}q_{ij}^{m_{ij}}s_1^{m_1}\dotsc s_k^{m_k} \quad (m_{ij}, m_1,\dotsc, m_k \in \Z)
	\end{equation}
	subject to the conditions 
	\begin{align}\label{eqn8.04}
		s_is_j=\left\{
		\begin{array}{ll}
			q_{ij}s_js_i, & i<j\\
			q_{ji}^{-1}s_js_i, & i>j 
		\end{array} 
		\right.,  
		\ \ 
		s_is_j^{-1}=\left\{
		\begin{array}{ll}
			q_{ij}^{-1}s_j^{-1}s_i, & i<j\\
			q_{ji}s_j^{-1}s_i, & i>j 
		\end{array} 
		\right., \ \  q_{ij}q_{mn}=q_{mn}q_{ij} \ \ \ \text{and} \ \ \  s_iq_{mn}=q_{mn}s_i,
	\end{align}
	for $1 \leq i< j \leq k$ and $1 \leq m<n \leq k$. The product in (\ref{eqn8.03}) is well-defined due to (\ref{eqn8.04}). For the ease of our computations, we shall denote by
	\[
	x^m=\underset{1\leq i < j \leq k }{\prod}q_{ij}^{m_{ij}}s_1^{m_1}\dotsc s_k^{m_k} \quad (m_{ij}, m_1,\dotsc, m_k \in \Z).
	\] 
	The order of the indeterminates matters.  We prove that the operations in (\ref{eqn8.04}) make $G_{dc}$ a group.
	
	\begin{lem}\label{group}
		Let $k \in \N$. The set 
		\begin{equation*}
			\begin{split}
				G_{dc}=\left\{\underset{1\leq i < j \leq k}{\prod}q_{ij}^{m_{ij}}s_1^{m_1}\dotsc s_k^{m_k} \ \bigg| \ m_{ij}, m_l \in \mathbb{Z}, \ 1 \leq i<j \leq k, \ 1 \leq l \leq k  \right \}
			\end{split}
		\end{equation*}
		forms a group under the multiplication operation as in $($\ref{eqn8.04}$)$.
	\end{lem}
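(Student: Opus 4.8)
The plan is to verify the group axioms directly by exploiting the defining relations in (\ref{eqn8.04}) to put every product into the canonical normal form displayed in (\ref{eqn8.03}). First I would observe that the relations say precisely that the $q_{ij}$'s are central and mutually commuting, while each $s_i$ commutes with every $q_{mn}$; the only non-trivial relations are the commutation rules between $s_i$ and $s_j$ (and their inverses), which introduce a central factor $q_{ij}^{\pm 1}$ whenever two $s$-letters in the ``wrong'' order are swapped. Hence any word in the generators $s_i^{\pm 1}, q_{ij}^{\pm 1}$ can be rewritten, by repeatedly moving all $q$-letters to the front (they are central) and bubble-sorting the $s$-letters into the order $s_1, s_2, \dots, s_k$, into exactly one expression of the form $x^m = \prod_{i<j} q_{ij}^{m_{ij}} s_1^{m_1}\cdots s_k^{m_k}$. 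I would record this as a normal-form statement and note that $G_{dc}$ is by definition the set of such normal forms.

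Next I would define the multiplication concretely on normal forms: given $x^m = \prod_{i<j} q_{ij}^{m_{ij}} s_1^{m_1}\cdots s_k^{m_k}$ and $x^n = \prod_{i<j} q_{ij}^{n_{ij}} s_1^{n_1}\cdots s_k^{n_k}$, the product $x^m x^n$ is formed by concatenation and then reduction to normal form; the key computation is that pulling $s_1^{n_1}\cdots s_k^{n_k}$ leftward past $s_1^{m_1}\cdots s_k^{m_k}$ (equivalently, moving each $s_j^{m_j}$ of the left factor rightward past the $s_i^{n_i}$ with $i<j$ of the right factor — these are the ``out of order'' crossings) produces the central scalar $\prod_{i<j} q_{ij}^{m_j n_i}$, so that
\[
x^m x^n = \prod_{1\le i<j\le k} q_{ij}^{\,m_{ij}+n_{ij}+ m_j n_i}\; s_1^{m_1+n_1}\cdots s_k^{m_k+n_k}.
\]
This formula shows the operation is well-defined and internal to $G_{dc}$. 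Associativity then follows either by a short direct check from this explicit cocycle-type formula (the correction terms $m_j n_i$ satisfy the $2$-cocycle identity, so both $(x^m x^n)x^p$ and $x^m(x^n x^p)$ collapse to $\prod q_{ij}^{m_{ij}+n_{ij}+p_{ij}+m_j n_i + m_j p_i + n_j p_i} s_1^{m_1+n_1+p_1}\cdots$), or, more cleanly, by noting that all relations in (\ref{eqn8.04}) are consequences of realizing $G_{dc}$ as the subgroup of invertible elements generated by $s_1,\dots,s_k,q_{ij}$ inside the group algebra / a concrete model, so associativity is inherited. I would use the explicit-formula argument since the excerpt is computational in spirit.

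Finally the identity element is $x^0$ (all exponents zero), and for the inverse of $x^m$ I would solve $x^m x^n = x^0$ using the multiplication formula: this forces $n_i = -m_i$ for each $i$ and $n_{ij} = -m_{ij} - m_j n_i = -m_{ij} + m_i m_j$, and one checks symmetrically that this same $x^n$ is also a left inverse (or invoke that a monoid in which every element has a right inverse, with the right inverse themselves having right inverses, is a group). I expect the only mildly delicate point to be the \emph{well-definedness} of the multiplication, i.e. confirming that the two ways of bubble-sorting a concatenated word (and, upstream, that the relations (\ref{eqn8.04}) are consistent and do not collapse the group) always yield the same normal form; this is handled by the standard diamond-lemma / confluence argument for the rewriting system $\{s_j s_i \to q_{ji}^{-1} s_i s_j,\ s_j s_i^{-1}\to q_{ji} s_i^{-1} s_j,\ \text{(for }i<j\text{)}\ \ \text{etc.}\}$ together with centrality of the $q$'s, which makes all critical pairs resolvable. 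Everything else is a routine verification using the displayed product formula.
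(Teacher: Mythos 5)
Your strategy is essentially the paper's: derive the generalized relation $s_i^{\alpha}s_j^{\beta}=q_{ij}^{\alpha\beta}s_j^{\beta}s_i^{\alpha}$ for $i<j$, reduce a concatenated word to the normal form $x^m$, write down an explicit cocycle-type product formula, and then verify closure, associativity, identity and inverses directly; your additional remark about confluence/well-definedness of the normal form is a refinement that the paper leaves implicit (it simply takes the rearranged expression as the definition of the product on exponent tuples). The one concrete problem is a sign error in your key formula. With the relations of (\ref{eqn8.04}), moving $s_j^{m_j}$ from the left factor rightward past $s_i^{n_i}$ with $i<j$ uses $s_js_i=q_{ij}^{-1}s_is_j$, so each such crossing contributes $q_{ij}^{-m_jn_i}$ and the correct formula is
\[
x^m x^n=\prod_{1\le i<j\le k} q_{ij}^{\,m_{ij}+n_{ij}-n_i m_j}\; s_1^{m_1+n_1}\cdots s_k^{m_k+n_k},
\]
not $q_{ij}^{\,m_{ij}+n_{ij}+m_jn_i}$ as you wrote; correspondingly the inverse of $x^m$ has $q_{ij}$-exponent $-m_{ij}-m_im_j$ (as in the paper's computation $(x^m)^{-1}=\prod_{i<j}q_{ij}^{-m_im_j}x^{-m}$), not $-m_{ij}+m_im_j$. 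Since you carry the flipped sign consistently, the two-cocycle identity and the group-axiom checks are structurally unaffected and the argument goes through verbatim once corrected; but as stated your formulas correspond to the opposite convention $s_is_j=q_{ij}^{-1}s_js_i$ rather than to (\ref{eqn8.04}).
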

	
	\begin{proof} By the principle of mathematical induction, it follows from (\ref{eqn8.04}) that  
		\[
		s_i^{\alpha}s_j^\beta=\left\{
		\begin{array}{ll}
			q_{ij}^{\alpha\beta}s_j^\beta s_i^\alpha, & i<j\\ \\
			q_{ji}^{-\alpha\beta}s_j^\beta s_i^\alpha, & i>j 
		\end{array} 
		\right. \qquad  (\alpha, \beta \in \Z).
		\]
		Let $x^m, x^n, x^r \in G_{dc}$. Then $x^mx^n  \in G_{dc}$ because,
		\begin{equation*}
			\begin{split}
				x^mx^n&=\left(\underset{1\leq i<j \leq k}{\prod}q_{ij}^{m_{ij}}s_1^{m_1}s_2^{m_2}\dotsc s_k^{m_k}\right) \left( \underset{1\leq i<j \leq k}{\prod}q_{ij}^{n_{ij}}s_1^{n_1}s_2^{n_2}\dotsc s_k^{n_k}\right)\\
				&=\underset{1\leq i<j \leq k}{\prod}q_{ij}^{m_{ij}+n_{ij}}\underset{1< j \leq k}{\prod}q_{1j}^{-n_1m_{j}} s_1^{m_1+n_1}s_2^{m_2}\dotsc s_k^{m_k}s_2^{n_2}\dotsc s_k^{n_k}\\
				&=\underset{1\leq i<j \leq k}{\prod}q_{ij}^{m_{ij}+n_{ij}}\underset{1< j \leq k}{\prod}q_{1j}^{-n_1m_{j}} \underset{2< j \leq k}{\prod}q_{2j}^{-n_2m_{j}} s_1^{m_1+n_1}s_2^{m_2+n_2}s_3^{m_3}\dotsc s_k^{m_k}s_3^{n_3}\dotsc s_k^{n_k}\\
				&\vdots\\
				&=\underset{1\leq i<j \leq k}{\prod}q_{ij}^{m_{ij}+n_{ij}}\underset{1 \leq i< j \leq k}{\prod}q_{ij}^{-n_im_{j}} s_1^{m_1+n_1}s_2^{m_2+n_2}\dotsc s_k^{m_k+n_k}\\
				&=\underset{1 \leq i< j \leq k}{\prod}q_{ij}^{-n_im_{j}}x^{m+n}.\\
			\end{split}
		\end{equation*}
		Furthermore, we have 
		\begin{equation*}
			\begin{split}
				(x^mx^n)x^r & =\underset{1 \leq i< j \leq k}{\prod}q_{ij}^{-n_im_{j}}x^{m+n}x^r=\underset{1 \leq i< j \leq k}{\prod}q_{ij}^{-n_im_{j}}\underset{1 \leq i< j \leq k}{\prod}q_{ij}^{-r_i(m_j+n_j)}x^{m+n+r}  		
			\end{split}
		\end{equation*}
		and 
		\begin{equation*}
			\begin{split}
				x^m(x^nx^r) & =x^m\underset{1 \leq i< j \leq k}{\prod}q_{ij}^{-r_in_{j}}x^{n+r}=\underset{1 \leq i< j \leq k}{\prod}q_{ij}^{-(n_i+r_i)m_j}\underset{1 \leq i< j \leq k}{\prod}q_{ij}^{-r_in_{j}}x^{m+n+r}. 		
			\end{split}
		\end{equation*}
		Thus $(x^mx^n)x^r=x^m(x^nx^r)$. The element $e=x^0$ serves as the identity of $G_{dc}$. For any $x^m \in G_{dc}$, it is not difficult to see that the inverse is given by 
		\[
				(x^m)^{-1}
				=\underset{1 \leq i< j \leq k}{\prod}q_{ij}^{-m_im_j}x^{-m},
				\]
		which is again an element of $G_{dc}$. The proof is complete. 
	\end{proof}
	
The classical theory of regular unitary dilation for commuting contractions is based on the notion of positive definite functions on a group and their	unitary representations which are defined below.
\begin{defn}
		Let $G$ be a group.
		\begin{enumerate}
			\item A function $T(s)$ on $G$, whose values are operators on a Hilbert space $\mathcal{H}$ is said to be \textit{positive definite} if $T\left(s^{-1}\right)=T(s)^* \text { for every } s \in G$ and the sum
			\begin{equation}\label{+}  
				\sum_{s \in G} \sum_{t \in G} \langle T(t^{-1} s) h(s), h(t)\rangle \geq 0, 
			\end{equation}
			for every $h \in c_{00}(G, \mathcal{H})$. Here $c_{00}(G, \mathcal{H})$ consists of maps $h: G \to \HS$ with finite support, i.e., $h$ has values different from zero on a finite subset of $G$ only.
			
			\item By a \textit{unitary representation} of the group $G$, we mean a function $U(s)$ on $G$, whose values are unitary operators on a Hilbert space $\mathcal{K}$ such that $U(e)=I$ ($e$ being the identity element of $G$) and $U(s) U(t)=U(st)$ for $s, t \in G$.
		\end{enumerate} 
	\end{defn}
Indeed, every positive definite function on a group possesses a unitary representation as the following celebrated theorem due to Naimark shows.
\begin{thm}[Naimark, \cite{NaimarkI}]\label{NaimarkI}
		For every positive definite function $T(s)$ on a group $G$ with identity $e$, whose values are operators on a Hilbert space $\mathcal{H}$ with $T(e)=I$, there is a
		unitary representation $U(s)$ of $G$ on a space $\mathcal{K}$ containing $\mathcal{H}$ such that
		\begin{equation*}
			T(s)=P_\mathcal{H}U(s)|_\mathcal{H} \quad(s \in G)  \quad \text{and} \quad 			\mathcal{K}=\bigvee_{s \in G}U(s)\mathcal{H} \quad(\mbox{minimality condition}).  
		\end{equation*}
		This unitary representation of $G$ is determined by the function $T(s)$ up to isomorphism. Conversely, given  a
		unitary representation $U(s)$ of $G$ on a space $\mathcal{K}$ and a subspace $\mathcal{H}$ of $\mathcal{K}$, the map $T: G \to \mathcal{B}(\mathcal{H})$ defined by 
		$
		T(s)=P_\mathcal{H}U(s)|_\mathcal{H}
		$
		is a positive definite function with $T(e)=I$.
	\end{thm}
Next comes the seminal theorem due to Brehmer \cite{Brehmer} (see Theorem \ref{thm_reg_Bre}) showing the equivalence of regular unitary dilation and Brehmer's positivity. A key step in the proof of Theorem \ref{thm_reg_Bre} for a commuting family $\{T_\alpha : \alpha \in \Lambda\}$ of contractions acting on a Hilbert space $\HS$ is to show that the map given by 
	\[
	T: \Z^\Lambda \to \mathcal{B}(\HS), \quad m \mapsto (T^{m^-})^*T^{m^+}
	\]
	is positive definite. Then an application of Naimark's Theorem gives the required family of commuting unitaries. To establish an analogue of this result in $q$-commuting setting, we need to overcome the following challenges. 
		\begin{enumerate}
		\item[(i)] Given a $q$-commuting family of contractions with $\|q\|=1$, we first need to construct an appropriate group that could replace $\Z^\Lambda$ suitably. 
		
		\item[(ii)] Once the group is constructed, we need to define a nice enough operator-valued function which we must show is positive definite.   
		
		\item[(iii)] The scalars in $q$ are preserved, i.e., for a $q$-commuting family $\mathcal{T}$ of contractions with $\|q\|=1$ on a Hilbert space $\HS$, we wish to have a family of unitaries $\mathcal{U}$ on a space $\mathcal{K} \supseteq \HS$ that dilates $\mathcal{T}$ and follows the same $q$-intertwining relations as $\mathcal{T}$. 
	\end{enumerate}
	Thus, our goal here is to apply Naimark's theorem and for this we construct a group $G_{dc}$ corresponding to a finite family $\underline{T}=(T_1, \dots , T_k)$ of $q$-commuting contractions with $\|q\|=1$ and then determine an appropriate operator-valued function $T$ on $G_{dc}$. After this, we follow the corresponding arguments from CH-I of \cite{Nagy}.

\smallskip
 
Unless mentioned otherwise, $G_{dc}$ denotes the group as in Lemma \ref{group}. Recall that for a contraction $C$  and $m \in \Z$, we define
	$
	C(m):=C^m$ if $m \geq 0$ and $C(m):=C^{*|m|}$ if $m<0$. For the sake of brevity, we often write
	\[
	G_{dc}=\left\{x^m=q^{m_0}s_1^{m_1}\dotsc s_k^{m_k} \bigg| m_{ij}, m_l \in \mathbb{Z}, \ 1 \leq i<j \leq k, \ 1 \leq l \leq k  \right \},\quad \text{where} \quad q^{m_0}=\underset{1\leq i < j \leq k}{\prod}q_{ij}^{m_{ij}}.	
		\]
Note that $q^{m_0}$ is used to denote an element in $G_{dc}$ as well as a complex scalar when multiplied with operators. From here onwards, we adopt the following way to represent $G_{dc}$:
\begin{equation}\label{eqn_Omega}
	G_{dc}=\{x^m : m \in \mathbb{Z}^{\Omega}\},
\end{equation}
where $\Omega$ is the indexing set $\{ij, l| 1\leq i<j \leq k, 1 \leq l \leq k\}$ and $\mathbb{Z}^{\Omega}$ is the set of the elements 
\[
m=(m_{12}, \dotsc, m_{1k}, m_{23}, \dotsc, m_{2k}, \dotsc, m_{k-1,k}, m_1, \dotsc , m_k)=(m_{ij} \vdots m_1, m_2, \dotsc, m_k)_{1 \leq i<j \leq k},
\]
with each entry in $\mathbb{Z}$. If all $m_{ij}, m_l \geq 0$, we write $m \geq 0$ and so, $n \geq m$ means that $n-m \geq 0.$ For arbitrary $n,m \in \mathbb{Z}^\Omega$, we set 
\begin{align*}
n \cup m & =(\max\{n_{ij}, m_{ij}\}\vdots, \max\{n_1, m_1\}, \dotsc, \max\{n_k, m_k\})_{1 \leq i<j \leq k}\\
n \cap m & =(\min\{n_{ij}, m_{ij}\}\vdots, \min\{n_1, m_1\}, \dotsc, \min\{n_k, m_k\})_{1 \leq i<j \leq k},
\end{align*}
which are again in $\mathbb{Z}^\Omega.$ Finally, we set $m^+=m\cup 0, m^-=-(m \cap 0)$ and so, $m^+-m^-=m$ for every $m \in \mathbb{Z}^\Omega$. We can retrieve the function $h$ from the function $g$. For $v \subset \Omega$, let us set $e(v)=(e_{ij}(v)\vdots e_1(v) \dotsc e_k(v))_{1 \leq i<j \leq k}$ in $\mathbb{Z}^\Omega$ such that
$
e_\omega(v)=\left\{
\begin{array}{ll}
	1, & \omega \in v\\
	0, & \omega \notin v
\end{array} 
\right. 
$. 

We will have our desired setting in the following theorem, which plays a crucial role in the proof of the $(2)\Leftrightarrow (3)$ parts of Theorem \ref{thm912}. 
 
	\begin{thm}\label{DCIII}
		Let $\underline T=(T_1, \dotsc, T_k)$ be a $q$-commuting tuple of contractions with $\|q\|=1$ on a Hilbert space $\mathcal{H}$. The map $T: G_{dc} \to \mathcal{B}(\mathcal{H})$ defined by 
		\[
		T\bigg(\underset{1\leq i<j \leq k}{\prod}q_{ij}^{m_{ij}}s_1^{m_1}\dotsc s_k^{m_k}\bigg)=\underset{1\leq i<j \leq k}{\prod}q_{ij}^{m_{ij}}\underset{1\leq i<j \leq k}\prod q_{ij}^{-m_{i}^+m_{j}^-}\left[(T_{1}^{m_1^-})^*\dotsc (T_{k}^{m_k^-})^*\right]\left[T_{1}^{m_1^+}\dotsc T_{k}^{m_k^+}\right]
		\]
		is positive definite if and only if $\underline T$ satisfies the Brehmer's positivity condition, i.e.,
		\begin{equation}\label{Brehmer's}
			S(u)=\sum_{v \subset u }(-1)^{|v|}T(x^{e(v)})^*T(x^{e(v)}) \geq 0
		\end{equation}
		for every subset $u$ of $\{1, \dotsc, k\}.$ 
	\end{thm}
 
\begin{proof} 
 
  Define an operator-valued function $T: G_{dc} \to \mathcal{B}(\mathcal{H})$ by
	\begin{equation}\label{Map2}
		T(q^{m_0}s_{1}^{m_1}\dotsc s_{k}^{m_k}):=q^{m_0}\underset{1\leq i<j \leq k}\prod q_{ij}^{-m_{i}^+m_{j}^-}\left[(T_{1}^{m_1^-})^*\dotsc (T_{k}^{m_k^-})^*\right]\left[T_{1}^{m_1^+}\dotsc T_{k}^{m_k^+}\right].
	\end{equation}
	We show that the map $T$ is positive definite. Note that $T(e)=I$. Take any $x^m \in G_{dc}$. Then
	\begin{equation*}
		\begin{split}
			& \ \quad T((x^m)^{-1})\\
			&=	T\bigg(\underset{1 \leq i< j \leq k}{\prod}q_{ij}^{-m_im_j}x^{-m}\bigg)\\
			&=\underset{1 \leq i< j \leq k}{\prod}q_{ij}^{-m_im_j}T(q^{-m_0}s_1^{-m_1}\dotsc s_k^{-m_k})=\underset{1 \leq i< j \leq k}{\prod}q_{ij}^{-m_im_j}T(q^{n_0}s_1^{n_1}\dotsc s_k^{n_k}) \quad (n_i=-m_i)\\
			&=q^{n_0}\underset{1 \leq i< j \leq k}{\prod}q_{ij}^{-m_im_j}\underset{1\leq i<j \leq k}\prod q_{ij}^{-n_{i}^+n_{j}^-}\left[(T_{1}^{n_1^-})^*\dotsc (T_{k}^{n_k^-})^*\right]\left[T_{1}^{n_1^+}\dotsc T_{k}^{n_k^+}\right] \\
			&=q^{-m_0}\underset{1 \leq i< j \leq k}{\prod}q_{ij}^{-m_im_j}\underset{1\leq i<j \leq k}\prod q_{ij}^{-m_{i}^-m_{j}^+}\left[(T_{1}^{m_1^+})^*\dotsc (T_{k}^{m_k^+})^*\right]\left[T_{1}^{m_1^-}\dotsc T_{k}^{m_k^-}\right] \\
			&=q^{-m_0}\underset{1 \leq i< j \leq k}{\prod}q_{ij}^{-m_im_j-m_{i}^-m_{j}^+}
			\left[\underset{1\leq i<j \leq k}\prod q_{ij}^{m_{i}^+m_{j}^+}(T_{k}^{m_k^+})^*\dotsc (T_{1}^{m_1^+})^*\right]
			\left[\underset{1\leq i<j \leq k}\prod q_{ij}^{m_{i}^-m_{j}^-}T_{k}^{m_k^-}\dotsc T_{1}^{m_1^-}\right] \\
			&=q^{-m_0}\underset{1 \leq i< j \leq k}{\prod}q_{ij}^{-m_im_j-m_{i}^-m_{j}^+ +m_{i}^+m_{j}^+ +m_{i}^-m_{j}^- }
			\left[(T_{k}^{m_k^+})^*\dotsc (T_{1}^{m_1^+})^*\right]
			\left[T_{k}^{m_k^-}\dotsc T_{1}^{m_1^-}\right] \\
			&=q^{-m_0}\underset{1 \leq i< j \leq k}{\prod}q_{ij}^{m_{i}^+m_{j}^-}
			\left[(T_{k}^{m_k^+})^*\dotsc (T_{1}^{m_1^+})^*\right]
			\left[T_{k}^{m_k^-}\dotsc T_{1}^{m_1^-}\right] \\
			&=T(x^m)^*.
		\end{split}
	\end{equation*}
As per the definition, the map $T$ in (\ref{Map2}) is positive definite if and only if
\[
\sum_{m, n \in \mathbb{Z}^\Omega} \left\langle T((x^n)^{-1}x^m)h(x^m),h(x^n)\right\rangle \geq 0 \quad \text{for all} \ h \in c_{00}(G_{dc}, \mathcal{H}).
\]
Let $h \in c_{00}(G_{dc}, \mathcal{H})$. The support of $h$ can be written as $\text{supp}(h)=\{x^m \in G_{dc} \ : \ -l \leq m \leq l\}$ for some $l \in \mathbb{Z}^\Omega.$  Define another function $\phi$ in $c_{00}(G_{dc}, \mathcal{H})$ as follows:
\[
\phi(x^m)=\underset{1 \leq i<j \leq k}{\prod}q_{ij}^{-m_il_j}h(x^{m-l}) \quad \mbox{which gives that} \quad h(x^m)=\underset{1 \leq i<j \leq k}{\prod}q_{ij}^{(m_i+l_i)l_j}\phi(x^{m+l}).
\]
Then
\begin{equation*}
	\begin{split}
		&\ \ \quad \sum_{m, n \in \mathbb{Z}^\Omega} \left\langle T((x^n)^{-1}x^m)h(x^m),h(x^n)\right\rangle\\
		&=\sum_{ -l \leq m, n \leq l} \left\langle T((x^n)^{-1}x^m)h(x^m),h(x^n)\right\rangle\\
		&=		\sum_{ -l \leq m, n \leq l} \left\langle T((x^n)^{-1}x^m)\underset{1 \leq i<j \leq k}{\prod}q_{ij}^{(m_i+l_i)l_j}\phi(x^{m+l}),\underset{1 \leq i<j \leq k}{\prod}q_{ij}^{(n_i+l_i)l_j}\phi(x^{n+l})\right\rangle\\
		&=		\sum_{ m, n \geq 0} \left\langle T((x^{n-l})^{-1}x^{m-l})\underset{1 \leq i<j \leq k}{\prod}q_{ij}^{m_il_j}\phi(x^{m}),\underset{1 \leq i<j \leq k}{\prod}q_{ij}^{n_il_j}\phi(x^{n})\right\rangle.\\
	\end{split}
\end{equation*}
Using the fact that 
\[
x^nx^m=\underset{1 \leq i< j \leq k}{\prod}q_{ij}^{-m_in_{j}}x^{n+m} \quad  \mbox{and} \quad  (x^m)^{-1}=\underset{1 \leq i< j \leq k}{\prod}q_{ij}^{-m_im_{j}}x^{-m},
\]
we have that 
\begin{equation*}
	\begin{split}
		T((x^{n-l})^{-1}x^{m-l})
		&=T\left(\underset{1 \leq i< j \leq k}{\prod}q_{ij}^{-(n_i-l_i)(n_j-l_j)}x^{-n+l}x^{m-l}\right)\\
		&=T\left(\underset{1 \leq i< j \leq k}{\prod}q_{ij}^{-(n_i-l_i)(n_j-l_j)}\underset{1 \leq i< j \leq k}{\prod}q_{ij}^{-(m_i-l_i)(-n_j+l_j)}x^{m-n}\right)\\
		&=T\left(\underset{1 \leq i< j \leq k}{\prod}q_{ij}^{-(m_i-n_i)(l_j-n_j)}x^{m-n}\right)\\
		&=\underset{1 \leq i< j \leq k}{\prod}q_{ij}^{-(m_i-n_i)(l_j-n_j)}T(x^{m-n}).\\
	\end{split}
\end{equation*}

Putting everything together, we have that

\begin{equation*}
	\begin{split}
		\sum_{m, n \in \mathbb{Z}^\Omega} \left\langle T((x^n)^{-1}x^m)h(x^m),h(x^n)\right\rangle 
		&=\sum_{ m, n \geq 0} \left\langle T((x^{n-l})^{-1}x^{m-l})\underset{1 \leq i<j \leq k}{\prod}q_{ij}^{(m_i-n_i)l_j}\phi(x^{m}),\phi(x^{n})\right\rangle\\
		&=		\sum_{ m, n \geq 0} \left\langle \underset{1 \leq i< j \leq k}{\prod}q_{ij}^{(m_i-n_i)n_j}T(x^{m-n})\phi(x^{m}),\phi(x^{n})\right\rangle.\\
	\end{split}
\end{equation*}
Consequently, the map $T$ as in (\ref{Map2}) is positive definite if and only if 

\begin{equation}\label{eqn802}
	\sum_{ m, n \geq 0} \left\langle \underset{1 \leq i< j \leq k}{\prod}q_{ij}^{(m_i-n_i)n_j}T(x^{m-n})h(x^{m}),h(x^{n})\right\rangle \geq 0
\end{equation}
for every $h \in c_{00}(G_{dc}, \mathcal{H}).$ Now we make the reciprocal formulas. Let us first observe that for every function $h \in c_{00}(G_{dc}, \mathcal{H}),$ the function
\begin{equation}\label{eqn803}
	g(x^n)=	\sum_{ m \geq  n}\left( \underset{1 \leq i< j \leq k}{\prod}q_{ij}^{(m_i-n_i)n_j}T(x^{m-n})h(x^{m})\right)  \quad (n \geq 0 \ \mbox{in} \ \mathbb{Z}^{\Omega})\\
\end{equation}
is also in $c_{00}(G_{dc}, \mathcal{H})$. The reciprocal formula for (\ref{eqn803}) is given by
\begin{equation}\label{eqn804}
	h(x^n)=\sum_{v \subset \Omega}(-1)^{|v|}\underset{1 \leq i< j \leq k}{\prod}q_{ij}^{e_i(v)n_j}T(x^{e(v)})g(x^{n+e(v)})\quad (n \geq 0 \ \mbox{in} \ \mathbb{Z}^{\Omega}),
\end{equation}
where as indicated $v$ runs over all the subsets of $\Omega$. The map $g$ is finitely non-zero, and hence there are only finitely many non-zero terms in the sum (\ref{eqn804}). In fact, for any fixed $n\geq 0$ in $\mathbb{Z}^\Omega$, we have 
\begin{equation*}
	\begin{split}
		& \quad 	\sum_{v \subset \Omega}(-1)^{|v|}\underset{1 \leq i< j \leq k}{\prod}q_{ij}^{e_i(v)n_j}T(x^{e(v)})g(x^{n+e(v)})\\
		&=\sum_{v \subset \Omega}(-1)^{|v|}\underset{1 \leq i< j \leq k}{\prod}q_{ij}^{e_i(v)n_j}T(x^{e(v)})\bigg[\sum_{ m \geq  n+e(v)}\left( \underset{1 \leq i< j \leq k}{\prod}q_{ij}^{(m_i-n_i-e_i(v))(n_j+e_j(v))}T(x^{m-n-e(v)})h(x^{m})\right)\bigg]	
		\\
		&=\sum_{\substack{v \subset \Omega \\ m \geq  n+e(v)}}(-1)^{|v|}\underset{1 \leq i< j \leq k}{\prod}q_{ij}^{(m_i-n_i-e_i(v))e_j(v)+(m_i-n_i)n_j}\bigg[T(x^{e(v)})\left( T(x^{m-n-e(v)})\right)\bigg]h(x^{m})\\
		&=\sum_{\substack{v \subset \Omega \\ m \geq  n+e(v)}}(-1)^{|v|}\underset{1 \leq i< j \leq k}{\prod}q_{ij}^{(m_i-n_i-e_i(v))e_j(v)+(m_i-n_i)n_j}\bigg[\underset{1 \leq i< j \leq k}{\prod}q_{ij}^{-(m_i-n_i-e_i(v))e_j(v)}T(x^{m-n})\bigg]h(x^{m})\\
		&=\sum_{\substack{v \subset \Omega \\ m \geq  n+e(v)}}(-1)^{|v|}\underset{1 \leq i< j \leq k}{\prod}q_{ij}^{(m_i-n_i)n_j}T(x^{m-n})h(x^{m})\\	
		&=\sum_{m \geq n}\bigg[\sum_{e(v) \leq m-n}(-1)^{|v|}\bigg]\underset{1 \leq i< j \leq k}{\prod}q_{ij}^{(m_i-n_i)n_j}T(x^{m-n})h(x^{m})\\
		&=h(x^n),	
	\end{split}
\end{equation*}
where we have used the fact that 
\begin{equation}\label{eqn805.0}
	T(x^n)T(x^m)=\underset{1 \leq i< j \leq k}{\prod}q_{ij}^{-m_in_j}T(x^{n+m}) \quad (n, m \geq 0 \ \text{in} \ \Z^\Omega)
\end{equation}
and if $v$ runs through all subsets of a finite set $v_0$ (including the empty set and the whole set) then
\begin{equation}\label{eqn805}
	\sum_{v \subset v_0}(-1)^{|v|}=\left\{
	\begin{array}{ll}
		1 & \ \mbox{if} \ v_0 \ \mbox{is empty},\\
		0 & \ \mbox{if} \ v_0 \ \mbox{is not empty.}
	\end{array} 
	\right. 
\end{equation}
Conversely, if one starts with an arbitrary function $g(x^n) (n \geq 0 \ \mbox{in} \ \mathbb{Z}^\Omega)$ in $c_{00}(G_{dc}, \mathcal{H})$
then the function $h(x^n)$, which it generates by the formula (\ref{eqn804}), is also in $c_{00}(G_{dc}, \mathcal{H})$. For every fixed $n \geq 0$ in $\mathbb{Z}^\Omega,$ we have
\begin{equation*}
	\begin{split}
		\quad			& \sum_{ m \geq  n}\left[ \underset{1 \leq i< j \leq k}{\prod}q_{ij}^{(m_i-n_i)n_j}T(x^{m-n})h(x^{m})\right]\\
		&= \sum_{ m \geq  n}\left[ \underset{1 \leq i< j \leq k}{\prod}q_{ij}^{(m_i-n_i)n_j}T(x^{m-n})\bigg(\sum_{v \subset \Omega}(-1)^{|v|}\underset{1 \leq i< j \leq k}{\prod}q_{ij}^{e_i(v)m_j}T(x^{e(v)})g(x^{m+e(v)})\bigg)\right]\\
		&= \sum_{\substack{m \geq  n\\ v \subset \Omega}}(-1)^{|v|} \underset{1 \leq i< j \leq k}{\prod}q_{ij}^{(m_i-n_i)n_j+e_i(v)m_j}\bigg[T(x^{m-n})T(x^{e(v)})\bigg]g(x^{m+e(v)})\\
		&= \sum_{\substack{m \geq  n\\ v \subset \Omega}}(-1)^{|v|} \underset{1 \leq i< j \leq k}{\prod}q_{ij}^{(m_i-n_i)n_j+e_i(v)m_j}\bigg[\underset{1 \leq i< j \leq k}{\prod}q_{ij}^{-e_i(v)(m_j-n_j)}T(x^{m-n+e(v)})\bigg]g(x^{m+e(v)}) \quad [\text{by} (\ref{eqn805.0})]\\
		&= \sum_{v \subset \Omega} \sum_{m \geq  n}(-1)^{|v|} \underset{1 \leq i< j \leq k}{\prod}q_{ij}^{(m_i-n_i+e_i(v))n_j}T(x^{m-n+e(v)})g(x^{m+e(v)})\\
		&=\sum_{v \subset \Omega} (-1)^{|v|} \sum_{p \geq  n+e(v)} \underset{1 \leq i< j \leq k}{\prod}q_{ij}^{(p_i-n_i)n_j}T(x^{p-n})g(x^{p})\\
		&=\sum_{p \geq n}\bigg[\sum_{e(v) \leq p-n} (-1)^{|v|}\bigg]\underset{1 \leq i< j \leq k}{\prod}q_{ij}^{(p_i-n_i)n_j}T(x^{p-n})g(x^{p})\\
		&=g(x^n),
	\end{split}
\end{equation*}
again by virtue of (\ref{eqn805}). We conclude that the formulas (\ref{eqn803}) and (\ref{eqn804}) give a transformation of $c_{00}(G_{dc}, \mathcal{H})$ onto itself (defined for $n \geq 0$ in $\mathbb{Z}^\Omega$) and the inverse of this transformation respectively. Consequently, in order that (\ref{eqn802}) hold for every $h(x^n) (n \geq 0)$ in $c_{00}(G_{dc}, \mathcal{H})$, it is necessary and sufficient that the sum
\begin{small} 
	\begin{equation*}
		\sum_{ m, n \geq 0}\sum_{\substack {v \subset \Omega \\ w \subset \Omega}}(-1)^{|v|+|w|} \left\langle \underset{1 \leq i< j \leq k}{\prod}q_{ij}^{(m_i-n_i)n_j+e_i(v)m_j-e_i(w)n_j}T(x^{m-n})T(x^{e(v)})g(x^{m+e(v)}),T(x^{e(w)})g(x^{n+e(w)})\right\rangle
	\end{equation*}
\end{small} 
be non-negative for every $g(x^n) (n \geq 0).$ Now this sum can be re-written as 
\begin{equation}\label{eqn806}
	\sum_{p \geq 0} \sum_{r \geq 0} \langle D(p,r)g(x^p), g(x^r) \rangle, 
\end{equation}
where $D(p,r)$ equals 
\begin{equation*}
	\sum_{\substack{v \subset \pi(p) \\ w \subset \pi(r)}}(-1)^{|v|+|w|}\underset{1 \leq i< j \leq k}{\prod}q_{ij}^{(p_i-e_i(v)-r_i)(r_j-e_j(w))+(p_j-e_j(v))e_i(v)}T(x^{e(w)})^*T(x^{p-e(v)-r+e(w)})T(x^{e(v)})
\end{equation*}
and for $n$ in $\mathbb{Z}^\Omega,$ the set $\pi(n)$ is defined as
\[
\pi(n)=\{\omega \in \Omega: n_\omega >0 \}.
\]
We now prove that $D(p,r)=0$ if $p \ne r$. Observe that if $p \ne r$, then the sets $\pi(p-r)$ and $\pi(r-p)$ cannot both be empty. By reason of symmetry, it is sufficient to consider the case when $\pi(p-r)$ is not empty. The set
	\[
	\delta(w)=\pi(p) \cap \pi(p-r+e(w))
	\]
	is then non-empty for every subset $w$ of $\Omega$ since it contains the set $\pi(p-r).$ Next, we observe that for every fixed $w \subset \Omega,$ one obtains all subsets $v$ of $\pi(p)$ by taking $v=v' \cup v'',$ where
	\[
	v'=v \cap (\pi(p)\setminus\delta(w)) \quad  \mbox{and} \quad v''=v \cap \delta(w).
	\]
	We have $|v|=|v'|+|v''|$ and $e(v)=e(v')+e(v'').$ Then $D(p,r)$ equals
	\begin{small} 
		\begin{equation}\label{bracketI}
			\sum_{ w \subset \pi(r)}(-1)^{|w|}T(x^{e(w)})^*\sum_{v' \subset \pi(p)\setminus \delta(w)}(-1)^{|v'|}\bigg[\sum_{v'' \subset \delta(w)}(-1)^{|v''|}\underset{1 \leq i< j \leq k}{\prod}q_{ij}^{\xi_{ij}(p;r)}T(x^{p-r+e(w)-e(v)})T(x^{e(v)})\bigg],
		\end{equation}
	\end{small} 
	where $\xi_{ij}(p;r)=(p_i-e_i(v)-r_i)(r_j-e_j(w))+(p_j-e_j(v))e_i(v).$ We use the substitution 
	\[
	a=p-r+e(w)-e(v)=p-r+e(w)-e(v')-e(v'') \quad \mbox{and} \quad b=a^+ + e(v'').
	\]
	Note that $a^-$ and $b$ do not depend upon $v'';$ namely we have 
	\[
	a^-=[p-r+e(w)-e(v')]^- \quad \mbox{and} \quad b=[p-r+e(w)-e(v')]^+,
	\]
	which is a consequence of the fact that for $\omega \in v''$ we have $p_\omega -r_\omega +e_\omega(w) \geq 1$ and $e_\omega(v')=0.$ 
	Now we compute the following.
	\begin{align}\label{step2.1}
		T(x^a)T(x^{e(v)})
		&=T\left(\prod_{1\leq i < j \leq k}q_{ij}^{a_{ij}}s_1^{a_1}\dotsc s_k^{a_k}\right)T\left(\prod_{1 \leq i <j \leq k}q_{ij}^{e_{ij}(v)}s_1^{e_1(v)}\dotsc s_k^{e_k}\right) \notag \\
		&=	\prod_{1\leq i < j \leq k}q_{ij}^{a_{ij}+e_{ij}(v)-a_i^+a_j^-}\bigg[(T_k^{a_k^-}\dotsc T_1^{a_1^-})^* T_1^{a_1^+}\dotsc T_k^{a_k^+}\bigg]T_1^{e_1(v)}\dotsc T_k^{e_k(v)} \notag \\	
		&=	\prod_{1\leq i < j \leq k}q_{ij}^{a_{ij}+e_{ij}(v)-a_i^+a_j^-}(T_k^{a_k^-}\dotsc T_1^{a_1^-})^* \bigg[T_1^{a_1^+}\dotsc T_k^{a_k^+}T_1^{e_1(v)}\dotsc T_k^{e_k(v)}\bigg] \notag \\	
		&= 	\prod_{1\leq i < j \leq k}q_{ij}^{a_{ij}+e_{ij}(v)-a_i^+a_j^-}(T_k^{a_k^-}\dotsc T_1^{a_1^-})^*\bigg[T_1^{a_1^+}\dotsc T_k^{a_k^+}T_1^{e_1(v'')}T_1^{e_1(v')}\dotsc T_k^{e_k(v'')}T_k^{e_k(v')}\bigg].
	\end{align}
	The bracket in (\ref{step2.1}) can be written as
	\begin{align}\label{step2.2}
		T_1^{a_1^+}\dotsc T_k^{a_k^+}\bigg[T_1^{e_1(v'')}T_1^{e_1(v')}\dotsc T_k^{e_k(v'')}T_k^{e_k(v')}\bigg]
		&=\prod_{1 \leq i <j \leq k}q_{ij}^{e_i(v')e_j(v'')-e_i(v'')a_j^+} T_1^{b_1}\dotsc T_k^{b_k}T_1^{e_1(v')}\dotsc T_k^{e_k(v')}. \notag \\
	\end{align}
	Hence, $T(x^a)T(x^{e(v)})$ equals
	\begin{equation*}
		\begin{split} 
			&	\prod_{1\leq i < j \leq k}q_{ij}^{a_{ij}+e_{ij}(v)-a_i^+a_j^-}(T_k^{a_k^-}\dotsc T_1^{a_1^-})^*\bigg[\prod_{1 \leq i <j \leq k}q_{ij}^{e_i(v')e_j(v'')-e_i(v'')a_j^+} T_1^{b_1}\dotsc T_k^{b_k}T_1^{e_1(v')}\dotsc T_k^{e_k(v')}\bigg]\\
			&=\prod_{1\leq i < j \leq k}q_{ij}^{a_{ij}+e_{ij}(v)-a_i^+a_j^-+e_i(v')e_j(v'')-e_i(v'')a_j^+}(T_k^{a_k^-}\dotsc T_1^{a_1^-})^*\bigg[ T_1^{b_1}\dotsc T_k^{b_k}T_1^{e_1(v')}\dotsc T_k^{e_k(v')}\bigg]\\
			&=\prod_{1\leq i < j \leq k}q_{ij}^{a_{ij}+e_{ij}(v)-a_i^+a_j^-+e_i(v')e_j(v'')-e_i(v'')a_j^++a_i^-a_j^-}(T_1^{a_1^-}\dotsc T_k^{a_k^-})^*\bigg[ T_1^{b_1}\dotsc T_k^{b_k}T_1^{e_1(v')}\dotsc T_k^{e_k(v')}\bigg]\\
			&=\prod_{1\leq i < j \leq k}q_{ij}^{\zeta_{ij}(v)}T(x^{a^-})^*T(x^b)T(x^{e(v')}),\\
		\end{split}
	\end{equation*}
	where
	\begin{equation*}
		\begin{split} 
			\zeta_{ij}(v)=-a_i^+a_j^-+e_i(v')e_j(v'')-e_i(v'')a_j^++a_i^-a_j^-
			&=-(a_i^+-a_i^-)a_j^-+e_i(v')e_j(v'')-e_i(v'')a_j^+\\
			&=-a_ia_j^-+e_i(v')e_j(v'')-e_i(v'')a_j^+.
		\end{split} 
	\end{equation*}
	Therefore, the bracket in (\ref{bracketI}) can be simplified as  
	\begin{small}
		\begin{equation}\label{step5}
			\sum_{v'' \subset \delta(w)}(-1)^{|v''|}\underset{1 \leq i< j \leq k}{\prod}q_{ij}^{\xi_{ij}(p;r)}T(x^{a})T(x^{e(v)})=\sum_{v'' \subset \delta(w)}(-1)^{|v''|}\underset{1 \leq i< j \leq k}{\prod}q_{ij}^{\xi_{ij}(p;r)+\zeta_{ij}(v)}T(x^{a^-})^*T(x^b)T(x^{e(v')}).
		\end{equation}
	\end{small}
	We compute the exponent $\xi_{ij}(p;r)+\zeta_{ij}(v)$ appearing in (\ref{step5}) which equals
	\begin{align}\label{step6}
		&=	(p_i-e_i(v)-r_i)(r_j-e_j(w))+(p_j-e_j(v))e_i(v)-a_ia_j^-+e_i(v')e_j(v'')-e_i(v'')a_j^+ \notag  \\
		&\begin{aligned}
			=&	(p_i-e_i(v')-e_i(v'')-r_i)(r_j-e_j(w))+(p_j-e_j(v')-e_j(v''))(e_i(v')+e_i(v''))\notag  \\
			& - (p_i-r_i+e_i(w)-e_i(v')-e_i(v''))a_j^-+e_i(v')e_j(v'')-e_i(v'')a_j^+
		\end{aligned}
		\notag 	\\
		&\begin{aligned}
			=&	(p_i-r_i-e_i(v'))(r_j-e_j(w))-(p_i-r_i+e_i(w)-e_i(v'))a_j^-
			\notag 	\\
			& +e_i(v'')\bigg[ (p_j-r_j+e_j(w)-e_j(v')-e_j(v''))-a_j\bigg] +(p_j-e_j(v'))e_i(v')
		\end{aligned}
		\notag  \\
		&=	(p_i-r_i-e_i(v'))(r_j-e_j(w))-(p_i-r_i+e_i(w)-e_i(v'))a_j^- +(p_j-e_j(v'))e_i(v').
	\end{align}
	This shows that the exponents $\xi_{ij}(p;r)+\zeta_{ij}(v)$ do not depend upon $v''.$ Consequently, using (\ref{step5}) and (\ref{step6}), the bracket in (\ref{bracketI}) is given by the following.
	\begin{align}\label{step7}
		& \quad \ \sum_{v'' \subset \delta(w)}(-1)^{|v''|}\underset{1 \leq i< j \leq k}{\prod}q_{ij}^{\xi_{ij}(p;r)}T(x^{a})T(x^{e(v)}) \notag \\
		& =\sum_{v'' \subset \delta(w)}(-1)^{|v''|}\underset{1 \leq i< j \leq k}{\prod}q_{ij}^{\xi_{ij}(p;r)+\zeta_{ij}(v)}T(x^{a^-})^*T(x^b)T(x^{e(v')}) \notag  \\
		&=\underset{1 \leq i< j \leq k}{\prod}q_{ij}^{\xi_{ij}(p;r)+\eta_{ij}(v)}T(x^{a^-})^*T(x^b)\bigg[\sum_{v'' \subset \delta(w)}(-1)^{|v''|}\bigg]T(x^{e(v')}) \notag \\
		&=0.	
	\end{align}
	The last equality holds because the set $\delta(w)$ is non-empty. Substituting (\ref{step7}) back in (\ref{bracketI}), we have that $D(p,r)=0$ for every $p \ne r.$ Therefore, the sum as in (\ref{eqn806}) reduces to
	\begin{equation*}
		\sum_{p \geq 0} \langle D(p,p)g(x^p), g(x^p) \rangle.	
	\end{equation*}
	In order that this sum be non-negative for every $g(x^p) (p \geq 0 \ \mbox{in} \ \mathbb{Z}^\Omega)$, it is necessary and sufficient that $D(p,p) \geq 0$ for every $p \geq 0$ in $\mathbb{Z}^\Omega$. Note that
	\begin{equation}\label{D(p,p)}
		\begin{split}
			D(p,p)&=	\sum_{\substack{v \subset \pi(p) \\ w \subset \pi(p)}}(-1)^{|v|+|w|}\underset{1 \leq i< j \leq k}{\prod}q_{ij}^{-e_i(v)(p_j-e_j(w))+(p_j-e_j(v))e_i(v)}T(x^{e(w)})^*T(x^{e(w)-e(v)})T(x^{e(v)})\\
			&=	\sum_{\substack{v \subset \pi(p) \\ w \subset \pi(p)}}(-1)^{|v|+|w|}\underset{1 \leq i< j \leq k}{\prod}q_{ij}^{e_i(v)(e_j(w)-e_j(v))}\bigg[T(x^{e(w)})^*T(x^{e(w)-e(v)})T(x^{e(v)})\bigg].\\
		\end{split}
	\end{equation}
	For fixed $v, w \subset \pi(p),$ we denote $c=e(w)-e(v).$ In (\ref{D(p,p)}), the term 
	\[
	\bigg[T(x^{e(w)})^*T(x^{e(w)-e(v)})T(x^{e(v)})\bigg]
	\]
	equals
	\[
	\bigg[(T_k^{e_k(w)})^*\dotsc (T_1^{e_1(w)})^*\bigg]\bigg[\underset{1 \leq i<j \leq k}{\prod}q_{ij}^{-c_i^+c_j^-}(T_1^{c_1^-})^*\dotsc (T_k^{c_k^-})^*T_1^{c_1^+}\dotsc T_k^{c_k^+}\bigg]\bigg[T_1^{e_1(v)}\dotsc T_k^{e_k(v)}\bigg]
	\] 
	\begin{align}\label{step8}
		&=\underset{1 \leq i<j \leq k}{\prod}q_{ij}^{-c_i^+c_j^-}\bigg[(T_k^{e_k(w)})^*\dotsc (T_1^{e_1(w)})^*(T_1^{c_1^-})^*\dotsc (T_k^{c_k^-})^*\bigg]\bigg[T_1^{c_1^+}\dotsc T_k^{c_k^+}T_1^{e_1(v)}\dotsc T_k^{e_k(v)}\bigg] \notag \\
		&=
		\underset{1 \leq i<j \leq k}{\prod}q_{ij}^{-c_i^+c_j^-}\bigg[\underset{1 \leq i<j \leq k}{\prod}q_{ij}^{(c_i^-+e_i(w))c_j^-}(T_k^{c_k^-+e_k(w)})^*\dotsc (T_1^{c_1^-+e_1(w)})^*\bigg] \notag \\
		& \qquad \ \ \ \quad \quad \qquad \ \bigg[\underset{1 \leq i<j \leq k}{\prod}q_{ij}^{-e_i(v)c_j^+}T_1^{c_1^++e_1(v)}\dotsc T_k^{c_k^++e_k(v)}\bigg]
		\notag  \\
		&=\underset{1 \leq i<j \leq k}{\prod}q_{ij}^{-c_i^+c_j^-+(c_i^-+e_i(w))c_j^--e_i(v)c_j^+}\bigg[(T_k^{c_k^-+e_k(w)})^*\dotsc (T_1^{c_1^-+e_1(w)})^*\bigg]\bigg[T_1^{c_1^++e_1(v)}\dotsc T_k^{c_k^++e_k(v)}\bigg]\notag  \\
		&=\underset{1 \leq i<j \leq k}{\prod}q_{ij}^{-c_i^+c_j^-+(c_i^-+e_i(w))c_j^--e_i(v)c_j^+}\bigg(\prod_{1 \leq i \leq k}T_i^{e_i(u)}\bigg)^*\bigg(\prod_{1 \leq i \leq k}T_i^{e_i(u)}\bigg) \quad (u=v \cup w).
	\end{align}
	We compute the following.
	\begin{equation*}
		\begin{split}
			\underset{1 \leq i< j \leq k}{\prod}q_{ij}^{e_i(v)c_j}\underset{1 \leq i<j \leq k}{\prod}q_{ij}^{-c_i^+c_j^-+(c_i^-+e_i(w))c_j^--e_i(v)c_j^+}&=
			\underset{1 \leq i< j \leq k}{\prod}q_{ij}^{e_i(v)c_j-c_i^+c_j^-+(c_i^-+e_i(w))c_j^--e_i(v)c_j^+}\\
			&=\underset{1 \leq i< j \leq k}{\prod}q_{ij}^{-e_i(v)c_j^--c_i^+c_j^-+(c_i^-+e_i(w))c_j^-}\\
			&=\underset{1 \leq i< j \leq k}{\prod}q_{ij}^{(e_i(w)-e_i(v))c_j^--c_i^+c_j^-+c_i^-c_j^-}\\
			&=\underset{1 \leq i< j \leq k}{\prod}q_{ij}^{c_ic_j^--c_i^+c_j^-+c_i^-c_j^-}\\
			&=1.
		\end{split} 
	\end{equation*}
	Putting the above computations in (\ref{D(p,p)}), we get that $D(p,p)$ is equal to
	\begin{small} 
		\begin{align}\label{D(p,p)II}
			&	\sum_{\substack{v \subset \pi(p) \\ w \subset \pi(p)}}(-1)^{|v|+|w|}\underset{1 \leq i< j \leq k}{\prod}q_{ij}^{e_i(v)c_j}\bigg[\underset{1 \leq i<j \leq k}{\prod}q_{ij}^{-c_i^+c_j^-+(c_i^-+e_i(w))c_j^--e_i(v)c_j^+}\bigg(\prod_{1 \leq i \leq k}T_i^{e_i(u)}\bigg)^*\bigg(\prod_{1 \leq i \leq k}T_i^{e_i(u)}\bigg)\bigg] \notag \\
			&=	\sum_{\substack{v \subset \pi(p) \\ w \subset \pi(p)}}(-1)^{|v|+|w|}\bigg(\prod_{1 \leq i \leq k}T_i^{e_i(u)}\bigg)^*\bigg(\prod_{1 \leq i \leq k}T_i^{e_i(u)}\bigg) \notag \\
			&=\sum_{\substack{v \subset \pi(p) \\ w \subset \pi(p)}}(-1)^{|v|+|w|}T(x^{e(u)})^*T(x^{e(u)})
			=\sum_{u \subset \pi(p) }(-1)^{|u|}T(x^{e(u)})^*T(x^{e(u)}).
		\end{align}
	\end{small} 
	The proof is now complete.
	\end{proof}
	
Below we prove the $(2)\Leftrightarrow (3)$ parts of Theorem \ref{thm912} for a finite $q$-commuting family which follow from Theorem \ref{NaimarkI} and Theorem \ref{DCIII}.

\begin{thm}\label{regular q-unitary dilation}
	Let $\underline{T}=(T_1, \dotsc, T_k)$ be a $q$-commuting tuple of contractions with $\|q\|=1$ acting on a Hilbert space $\HS$. Then $\underline{T}$ admits a regular $Q$-unitary dilation if and only if it satisfies the Brehmer's positivity condition, i.e.,
	\[
	S(u)=\sum_{v \subset u }(-1)^{|v|}T(x^{e(v)})^*T(x^{e(v)}) \geq 0
	\] 
	for every subset $u$ of $\{1, \dotsc, k\}$, where $T$ is the map as in Theorem \ref{DCIII}. 
\end{thm}

\begin{proof}
Let $\underline{U}=(U_1, \dotsc, U_k)$ acting on a Hilbert space $\mathcal{K}$ be a regular $Q$-unitary dilation of the $q$-commuting tuple $\underline{T}$.  We show that $\underline{T}$ satisfies the Brehmer's positivity condition. By Theorem \ref{DCIII}, it suffices to show that the map $T: G_{dc} \to \mathcal{B}(\HS)$ given by
\[
T\bigg(\underset{1\leq i<j \leq k}{\prod}q_{ij}^{m_{ij}}s_1^{m_1}\dotsc s_k^{m_k}\bigg)=\underset{1\leq i<j \leq k}{\prod}q_{ij}^{m_{ij}}\underset{1\leq i<j \leq k}\prod q_{ij}^{-m_{i}^+m_{j}^-}\left[(T_{1}^{m_1^-})^*\dotsc (T_{k}^{m_k^-})^*\right]\left[T_{1}^{m_1^+}\dotsc T_{k}^{m_k^+}\right]
\]
is positive definite. Let us consider the map $U: G_{dc} \to \mathcal{B}(\mathcal{K})$ given by
\[
U\bigg(\underset{1\leq i<j \leq k}{\prod}q_{ij}^{m_{ij}}s_1^{m_1}\dotsc s_k^{m_k}\bigg)=\underset{1\leq i<j \leq k}{\prod}Q_{ij}^{m_{ij}}\underset{1\leq i<j \leq k}\prod Q_{ij}^{-m_{i}^+m_{j}^-}\left[(U_{1}^{m_1^-})^*\dotsc (U_{k}^{m_k^-})^*\right]\left[U_{1}^{m_1^+}\dotsc U_{k}^{m_k^+}\right].
\]
Evidently, $U$ is an identity preserving map. Since $U_1, \dotsc, U_k$ are $Q$-commuting unitaries, it follows that $U_1, \dotsc, U_k$ are doubly $Q$-commuting. Following the similar computations as in \eqref{eqn402}, we have
\[
U\bigg(\underset{1\leq i<j \leq k}{\prod}q_{ij}^{m_{ij}}s_1^{m_1}\dotsc s_k^{m_k}\bigg)=\underset{1\leq i<j \leq k}{\prod}Q_{ij}^{m_{ij}}U_1^{m_1}\dotsc U_k^{m_k}.
\]
For $x^m=\underset{1\leq i<j \leq k}{\prod}q_{ij}^{m_{ij}}s_1^{m_1}\dotsc s_k^{m_k}$ and $x^n=\underset{1\leq i<j \leq k}{\prod}q_{ij}^{n_{ij}}s_1^{n_1}\dotsc s_k^{n_k}$ in $G_{dc}$, we have by Lemma \ref{group} that 
\[
x^mx^n=\underset{1\leq i<j \leq k}{\prod}q_{ij}^{m_{ij}+n_{ij}}\underset{1 \leq i< j \leq k}{\prod}q_{ij}^{-n_im_{j}} s_1^{m_1+n_1}s_2^{m_2+n_2}\dotsc s_k^{m_k+n_k}
=\underset{1 \leq i< j \leq k}{\prod}q_{ij}^{-n_im_{j}}x^{m+n}.
\]
Using the fact that $U_i^{m_i}U_j^{m_j}=Q_{ij}^{m_im_j}U_j^{m_j}U_i^{m_i}$ for all $m_i, m_j \in \Z$ and $1 \leq i, j \leq k$ with $i \ne j$, we have
\begin{equation*}
	\begin{split}
		U(x^mx^n)
		=U\left(\underset{1 \leq i< j \leq k}{\prod}q_{ij}^{-n_im_{j}}x^{m+n}\right)		
		=\underset{1 \leq i< j \leq k}{\prod}Q_{ij}^{-n_im_{j}+m_{ij}+n_{ij}}U_1^{m_1+n_1}\dotsc U_k^{m_k+n_k}=U(x^m)U(x^n)
	\end{split}
\end{equation*}
and so, $U$ is a unitary representation on $G_{dc}$. Let $x^m=\underset{1\leq i<j \leq k}{\prod}q_{ij}^{m_{ij}}s_1^{m_1}\dotsc s_k^{m_k} \in G_{dc}$ and let $h \in \HS$. Since $\underline{U}$ is a regular $Q$-unitary dilation of $\underline{T}$, we have
\begin{align*}
	P_{\HS}U(x^m)h
	&=P_\HS \underset{1\leq i<j \leq k}{\prod}Q_{ij}^{m_{ij}}U_1^{m_1}\dotsc U_k^{m_k}h\\
		&=P_\HS U_1^{m_1}\dotsc U_k^{m_k}\left(\underset{1\leq i<j \leq k}{\prod}Q_{ij}^{m_{ij}}h\right) \qquad [\text{since each $Q_{ij}$ commute with $U_1, \dotsc, U_k$}]\\
&=P_\HS U_1^{m_1}\dotsc U_k^{m_k}\left(\underset{1\leq i<j \leq k}{\prod}q_{ij}^{m_{ij}}h\right) \qquad [\text{as $Q_{ij}h=q_{ij}h$ for every $h \in \HS$}]\\
&=\underset{1\leq i<j \leq k}{\prod}q_{ij}^{m_{ij}}P_\HS U_1^{m_1}\dotsc U_k^{m_k}h\\
	&=\underset{1\leq i<j \leq k}{\prod}q_{ij}^{m_{ij}-m_{i}^+m_{j}^-}\left[(T_{1}^{m_1^-})^*\dotsc (T_{k}^{m_k^-})^*\right]\left[T_{1}^{m_1^+}\dotsc T_{k}^{m_k^+}\right]h \\
	&=T(x^m)h
\end{align*}
and thus, $P_{\HS}U(g)|_\HS=T(g)$ for all $g \in G_{dc}$. It follows from Theorem \ref{NaimarkI} that the $\mathcal{B}(\HS)$-valued map $T$ is positive definite. 

\medskip 	

Conversely, suppose $\underline{T}$ satisfies the Brehmer's positivity condition. It follows from Theorem \ref{DCIII} that the operator-valued function $T$ as in (\ref{Map2}) is positive definite. By Theorem \ref{NaimarkI}, there is a Hilbert space $\mathcal{K} \supseteq \mathcal{H}$ and a
	unitary representation $U: G_{dc} \to \mathcal{B}(\mathcal{K})$ such that
	$T(x^m)=P_\mathcal{H}U(x^m)|_\mathcal{H}$ for every $x^m \in G_{dc}$. Let  $U_i=U(s_i), Q_{ij}=U(q_{ij})$ and $Q_{ji}=Q_{ij}^*$ for $ 1 \leq i < j  \leq k$. Note that 
	\[
	U_iU_i^*=U(s_i)U(s_i^{-1})=U(e)=U(s_i^{-1})U(s_i)=U_i^*U_i
	\]
	and so, each $U_i$ is a unitary since $U(e)=I$. Similarly, each $Q_{ij}$ is also a unitary operator. Furthermore, it follows from the definition of a unitary representation that $U_iU_j=Q_{ij}U_jU_i$ and
	\begin{align*}
		\underset{1\leq i<j \leq k}\prod q_{ij}^{-m_{i}^+m_{j}^-}\left[(T_{1}^{m_1^-})^*\dotsc (T_{k}^{m_k^-})^*\right]\left[T_{1}^{m_1^+}\dotsc T_{k}^{m_k^+}\right]
		=T\left(s_1^{m_1}\dotsc s_k^{m_k}\right)
		=P_{\mathcal{H}}U_1^{m_1}\dotsc U_k^{m_k}|_\mathcal{H}
	\end{align*} 	
	for $m_1, \dotsc, m_k \in \mathbb{Z}$. Again by the fact that $U$ is a unitary representation of $T$, it follows that  
	\[
	q_{ij}I_\HS=P_{\mathcal{H}}Q_{ij}|_\mathcal{H}
	\quad \text{and so,} \quad 
	Q_{ij}=\begin{bmatrix}
		q_{ij}I_\HS & A_{ij}\\
		B_{ij} & \widetilde{Q}_{ij}
	\end{bmatrix}
	\]
	with respect to $\mathcal{K}=\mathcal{H}\oplus \mathcal{H}^{\perp}$. Since both $q_{ij}I_\HS$ and $Q_{ij}$ are unitaries, it is easy to see that  
	\[
	0=Q_{ij}^*Q_{ij}-I_{\mathcal{K}}=\begin{bmatrix}
		B_{ij}^*B_{ij} & * \\
		* & *
	\end{bmatrix} \quad \text{and} \quad 0=Q_{ij}Q_{ij}^*-I_{\mathcal{K}}=\begin{bmatrix}
		A_{ij}A_{ij}^* & * \\
		* & *
	\end{bmatrix}.
	\]
	Thus,
	$
	A_{ij}=0=B_{ij}
	$ 
	and so, $Q_{ij}|_\HS=q_{ij}I_\HS$ for $1 \leq i, j \leq k$ with $i \ne j$. The proof is complete.
\end{proof}

Though Naimark's theorem finds a way to establish an equivalence between Brehmer's positivity condition and regular $Q$-unitary dilation of a $q$-commuting finite tuple $\underline T$, clearly it fails to achieve a regular $q$-unitary dilation for $\underline T$. To resolve this issue, we employ Stienspring's dilation theorem which is stated below along with an elementary proposition from the literature that gives a connection between completely positive and completely bounded maps. 	
	\begin{prop}[\cite{Paulsen}, Proposition 3.6]\label{prop_cp_cb}
		Let $\mathcal{A}, \mathcal{B}$ be unital $C^*$-algebras and let $S \subseteq \mathcal{A}$ be an operator system. Then every completely positive map $\phi: S \to \mathcal{B}$ is completely bounded and $\|\phi(1)\|=\|\phi\|=\|\phi\|_{cb}$.
	\end{prop}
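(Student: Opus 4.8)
The plan is to deduce the whole statement from a single inequality, $\|\phi\|\le\|\phi(1)\|$, which holds for any $2$-positive (in particular, any completely positive) map out of an operator system, and then to obtain the completely bounded estimate by running that inequality on each matrix amplification $\phi_n$. First I would dispose of the trivial direction: since $1\in S$ is a positive element of norm $1$, positivity of $\phi$ gives $\phi(1)\ge 0$ and $\|\phi(1)\|\le\|\phi\|$. I would also record the standard fact that a positive map on an operator system is automatically self-adjoint, i.e. $\phi(a^*)=\phi(a)^*$ for all $a\in S$, since every self-adjoint $x\in S$ with $\|x\|\le 1$ is the difference $\tfrac12(1+x)-\tfrac12(1-x)$ of two positive elements of $S$, each of which is sent by $\phi$ to a (self-adjoint) positive element of $\mathcal{B}$.

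For the reverse inequality, fix $a\in S$ with $\|a\|\le 1$ and pass to $M_2(\mathcal{A})$, forming the self-adjoint element $\tilde a=\left(\begin{smallmatrix} 0 & a \\ a^* & 0 \end{smallmatrix}\right)$ of the operator system $M_2(S)$. Computing $\tilde a^{\,2}=\left(\begin{smallmatrix} aa^* & 0 \\ 0 & a^*a \end{smallmatrix}\right)$ shows $\|\tilde a\|=\|a\|\le 1$, so, $\tilde a$ being self-adjoint, $-I_2\le\tilde a\le I_2$ in $M_2(\mathcal{A})$. Since $\phi$ is completely positive, the amplification $\phi_2:M_2(S)\to M_2(\mathcal{B})$ is positive, and applying it to this inequality yields
\[
-\left(\begin{smallmatrix} \phi(1) & 0 \\ 0 & \phi(1) \end{smallmatrix}\right)\;\le\;\left(\begin{smallmatrix} 0 & \phi(a) \\ \phi(a)^* & 0 \end{smallmatrix}\right)\;\le\;\left(\begin{smallmatrix} \phi(1) & 0 \\ 0 & \phi(1) \end{smallmatrix}\right).
\]
Because $-Y\le X\le Y$ with $Y\ge 0$ and $X$ self-adjoint forces $\|X\|\le\|Y\|$, and because the central matrix has norm $\|\phi(a)\|$ while the outer ones have norm $\|\phi(1)\|$, we conclude $\|\phi(a)\|\le\|\phi(1)\|$. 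Hence $\|\phi\|\le\|\phi(1)\|$, which together with the trivial direction gives $\|\phi\|=\|\phi(1)\|$.

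To upgrade this to complete boundedness I would repeat the argument on each amplification: for every $n$ the map $\phi_n:M_n(S)\to M_n(\mathcal{B})$ is again completely positive (its own amplifications coincide, after the canonical reshuffling of matrix entries, with amplifications of $\phi$), and $M_n(S)$ is an operator system with unit $I_n$, so the case just proved gives $\|\phi_n\|=\|\phi_n(I_n)\|=\|\mathrm{diag}(\phi(1),\dots,\phi(1))\|=\|\phi(1)\|$. Taking the supremum over $n$ shows $\|\phi\|_{cb}=\|\phi(1)\|<\infty$, so $\phi$ is completely bounded and $\|\phi(1)\|=\|\phi\|=\|\phi\|_{cb}$. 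I expect the only point that needs a little care is the bookkeeping that $\phi_n$ inherits complete positivity and that $M_n(S)$ genuinely is an operator system, so that the order-theoretic argument applies verbatim at every level; everything else is routine $C^*$-algebra manipulation.
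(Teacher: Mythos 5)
Your argument is correct and is essentially the standard proof of the cited result (Paulsen, Proposition 3.6), which the paper itself only quotes without reproducing: one applies positivity of $\phi_2$ to $I_2\pm\bigl(\begin{smallmatrix} 0 & a\\ a^* & 0\end{smallmatrix}\bigr)$ to get $\|\phi(a)\|\le\|\phi(1)\|$, and then runs the same bound on each amplification $\phi_n$ to conclude $\|\phi\|_{cb}=\|\phi\|=\|\phi(1)\|$. The minor points you flag (self-adjointness of positive maps on operator systems, $M_n(S)$ being an operator system, $(\phi_n)_m=\phi_{nm}$) are indeed routine and handled exactly as you indicate.
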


\begin{thm}[Stinespring's dilation theorem, \cite{Stine}]\label{Stine}
		Let $\mathcal{A}$ be a unital $C^*$-algebra and let $\HS$ be a Hilbert space. If $\phi: \mathcal{A} \to \mathcal{B}(\HS)$ is a unital completely positive map, then there exist a Hilbert space $\mathcal{K}$ containing $\HS$ and a unital $*$-homomorphism $\psi: \mathcal{A} \to \mathcal{B}(\mathcal{K})$ such that 
		$
		\phi(a)=P_\HS\psi(a)|_\HS
		$ 	
		for every $a \in \mathcal{A}$.	
	\end{thm}

 Indeed, we construct a unital $C^*$-algebra and a completely positive map corresponding to $\underline{T}$ to apply Stienspring's theorem as shown below. This settles the $(1)\Leftrightarrow (2)$ parts of the Theorem \ref{thm912} for a finite family of $q$-commuting contractions.

\begin{thm}\label{regular q-unitary dilationII}
	Let $\underline{T}=(T_1, \dotsc, T_k)$ be a $q$-commuting tuple of contractions with $\|q\|=1$ acting on a Hilbert space $\HS$. Then $\underline{T}$ admits a regular $q$-unitary dilation if and only if it satisfies the Brehmer's positivity condition.
\end{thm}

\begin{proof}
We begin with a $q$-commuting tuple $\underline{T}=(T_1, \dotsc, T_k)$ of contractions with $\|q\|=1$ acting on a Hilbert space $\HS$ which satisfies the Brehmer's positivity condition. Let $\mathbb{F}_k$ be the free group of reduced words consisting of letters $s_1, \dotsc, s_k$ and their inverses  $s_1^{-1}, \dotsc, s_k^{-1}$. Let $\C[\mathbb{F}_k]$ be the group algebra with elements  of the form 
${\displaystyle
\sum_{g \in \mathbb{F}_k}a_gg},
$
where each $a_g$ is a complex number and $a_g$ is non-zero for all but finitely many $g$ in $\mathbb{F}_k.$ The multiplication and involution operations are defined as
$
(a_gg)(a_hh):=a_ga_hgh$ and $(a_gg)^*=\overline{a_g}g^{-1}$.
The norm given by 
\[
\|.\|: \C[\mathbb{F}_k] \to [0, \infty), \quad \|a\|=\sup\{\|\pi(a)\|: \pi \ \mbox{is a representation of } \ \mathbb{C}[\mathbb{F}_k] \}
\]
is well-defined. The completion of $\mathbb{C}[\mathbb{F}_k]$ in this norm, denoted by $C^*(\mathbb{F}_k)$, is a unital $C^*$- algebra and contains $\mathbb{C}[\mathbb{F}_k]$ as a dense $*$-algebra. An interested reader may refer to Section 2.4 in \cite{PutnamII} for further details. Let $\mathcal{I}_{dc}$ be the closure of the two-sided ideal in $C^*(\mathbb{F}_k)$ generated by elements of the form
$
s_is_j-q_{ij}s_js_i$ and $s_is_j^{-1}-q_{ij}^{-1}s_j^{-1}s_i , \, \, (i \ne j)$. Thus 
$
s_is_j+\mathcal{I}_{dc}=q_{ij}s_js_i+\mathcal{I}_{dc}$ and $s_is_j^{-1}+\mathcal{I}_{dc}=q_{ij}^{-1}s_j^{-1}s_i+\mathcal{I}_{dc}$ for $i \ne j$.
Endowed with the natural quotient norm, the algebra $\mathcal{A}_{dc}$ given by
$
\mathcal{A}_{dc}=C^*(\mathbb{F}_k)\slash \mathcal{I}_{dc}
$ 
is a unital $C^*$-algebra. One can see Section 3.1 in \cite{Murphy} for further details on ideals and quotients of $C^*$-algebras. Let us take $S_{dc}$ to be an operator system of $\mathcal{A}_{dc}$ given by
$
S_{dc}=(\mathbb{C}[\mathbb{F}_k]+\mathcal{I}_{dc})\slash \mathcal{I}_{dc}.
$ 
The unit element of $S_{dc}$ is $e+\mathcal{I}_{dc}$, where $e$ (empty word) is the unit element of $\C[\mathbb{F}_k]$. Indeed, $S_{dc}$ is a dense $*$-subalgebra of $\mathcal{A}_{dc}$ since $\mathbb{C}[\mathbb{F}_k]$ is dense in $C^*(\mathbb{F}_k)$. Note that for any $g \in \mathbb{F}_k$ and $a_g \in \C$, the element $a_gg+\mathcal{I}_{dc}$ can be uniquely written as 

\begin{equation}\label{eqn8.24}
	a_gg+\mathcal{I}_{dc}=\alpha\underset{1\leq i<j \leq k}{\prod}q_{ij}^{m_{ij}}s_1^{m_1}\dotsc s_k^{m_k}+\mathcal{I}_{dc}=\alpha x^m+\mathcal{I}_{dc}
\end{equation}
for some $\alpha \in \C$ and $m_{ij}, m_1, \dotsc, m_k$ in $\Z$.  Any element $(f+\mathcal{I}_{dc})$ in $S_{dc}$ is a finite linear combination of elements of the form $a_gg+\mathcal{I}_{dc} \ (g \in G)$. Consider the map (which is extended linearly to $S_{dc}$) given by
\begin{equation}\label{eqn:phi2}
	\widehat{\phi}_{c}: S_{dc} \to \mathcal{B}(\HS), \quad 	\alpha x^m+\mathcal{I}_{dc} \mapsto \alpha T(x^m), 
\end{equation}
where $\displaystyle x^m=\underset{1\leq i<j \leq k}{\prod}q_{ij}^{m_{ij}}s_1^{m_1}\dotsc s_k^{m_k}$ and $T(x^m)$ is given as in \eqref{Map2}, i.e., 
	\[
T(x^m)=\underset{1\leq i<j \leq k}{\prod}q_{ij}^{m_{ij}}\underset{1\leq i<j \leq k}\prod q_{ij}^{-m_{i}^+m_{j}^-}\left[(T_{1}^{m_1^-})^*\dotsc (T_{k}^{m_k^-})^*\right]\left[T_{1}^{m_1^+}\dotsc T_{k}^{m_k^+}\right].
\]
We now show that $\widehat{\phi}_{c}$ is completely positive. It follows from Lemma 3.13 and discussion thereafter in \cite{Paulsen} that it suffices to show  
\[
\sum_{l=1}^{n}\sum_{t=1}^{n}\left \langle  \widehat{\phi}_{c}(a_l^*a_t)y_t, y_l \right \rangle \geq 0 
\]
for every $n \in \N, \ \{a_t \ :\  1 \leq t \leq n\} \subseteq S_{dc}$ and $\{y_t \ : \ 1 \leq t \leq n \} \subseteq \HS$. It follows from (\ref{eqn8.24}) that each $a_{t}$ can be written as a finite linear combination of the elements of the form
\[
\xi_{t}+\mathcal{I}_{dc}= \alpha_{t}\prod_{1 \leq i <j \leq k}q_{ij}^{m_{ij}(t)}s_1^{m_1(t)}\dotsc s_k^{m_k(t)}+\mathcal{I}_{dc}
\]
for some $\alpha_{t} \in \C$ and $m_{ij}(t), m_1(t), \dotsc, m_k(t)$ in $\Z$. Thus $\widehat{\phi}_{c}$ is completely positive if and only if

\[
\sum_{p,r=1}^{n}\left \langle  \widehat{\phi}_{c}((\xi_{p}+\mathcal{I}_{dc})^*(\xi_r+\mathcal{I}_{dc}))y_r, y_p \right \rangle \geq 0 
\]
for every $n \in \N, \ \{ \xi_1+\mathcal{I}_{dc}, \dotsc, \xi_n+\mathcal{I}_{dc}\} \subseteq  S_{dc}$ and $\{y_1, \dotsc, y_n\} \subseteq \mathcal{H}$. Note that
\begin{equation*}
	\begin{split}
		& \quad \ (\xi_p+\mathcal{I}_{dc})^*(\xi_r+\mathcal{I}_{dc})\\
		&=\bigg[\bigg(\alpha_p\prod_{1 \leq i <j \leq k}q_{ij}^{m_{ij}(p)}s_1^{m_1(p)}s_2^{m_2(p)}\dotsc s_k^{m_k(p)} \bigg)^*\alpha_r\prod_{1 \leq i <j \leq k}q_{ij}^{m_{ij}(r)}s_1^{m_1(r)}s_2^{m_2(r)}\dotsc s_k^{m_k(r)}\bigg]+\mathcal{I}_{dc}\\
		&=\bigg[\overline{\alpha}_p\alpha_r\prod_{1 \leq i <j \leq k}q_{ij}^{-m_{ij}(p)+m_{ij}(r)}s_k^{-m_k(p)}\dotsc s_2^{-m_2(p)}s_1^{-m_1(p)}s_1^{m_1(r)}s_2^{m_2(r)}\dotsc s_k^{m_k(r)}\bigg]+\mathcal{I}_{dc}
	\end{split}
\end{equation*}
and so,
\[
\sum_{p,r=1}^{n}\left \langle  \widehat{\phi}_{c}((\xi_p+\mathcal{I}_{dc})^*(\xi_r+\mathcal{I}_{dc}))y_r, y_p \right \rangle 
\]
\begin{small}
	\begin{equation*}
		\begin{split}
			&=\sum_{p,r=1}^{n}\left \langle  \widehat{\phi}_{c}\bigg(\overline{\alpha}_p\alpha_r\prod_{1 \leq i <j \leq k}q_{ij}^{-m_{ij}(p)+m_{ij}(r)}s_k^{-m_k(p)}\dotsc s_2^{-m_2(p)}s_1^{-m_1(p)}s_1^{m_1(r)}s_2^{m_2(r)}\dotsc s_k^{m_k(r)}+\mathcal{I}_{dc}\bigg)y_r, y_p \right \rangle \\		
			&= \sum_{p,r=1}^{n}\left \langle  \widehat{\phi}_{c}\bigg(s_k^{-m_k(p)}\dotsc s_2^{-m_2(p)}s_1^{-m_1(p)}s_1^{m_1(r)}s_2^{m_2(r)}\dotsc s_k^{m_k(r)}+\mathcal{I}_{dc}\bigg)\alpha_r \prod_{1 \leq i <j \leq k}q_{ij}^{m_{ij}(r)} y_r, \alpha_p \prod_{1 \leq i <j \leq k}q_{ij}^{m_{ij}(p)}y_p \right \rangle \\	
			&= \sum_{p,r=1}^{n}\left \langle  \widehat{\phi}_{c}((x^{m(p)})^{-1}x^{m(r)}+\mathcal{I}_{dc})y'_r, y'_p \right \rangle, \\	
		\end{split}
	\end{equation*}
\end{small}
where $x^{m(p)}=s_1^{m_1(p)}\dotsc s_k^{m_k(p)}$ in $\mathbb{C}[\mathbb{F}_k]$ and $y_p'= \alpha_p \underset{1 \leq i <j \leq k}{\prod}q_{ij}^{m_{ij}(p)}y_p$ in $\mathcal{H}$. Define 
\[
h(x^{m(p)})=\left\{
\begin{array}{ll}
	y_p' & \mbox{if} \ 1\leq p \leq n\\
	0 &  \mbox{otherwise} 
\end{array} 
\right. 
\]
which is a function in $c_{00}(G_{dc}, \mathcal{H})$ with respect to the group $G_{dc}$ defined earlier. Also, note that
\begin{equation*}
	\begin{split} 
		\widehat{\phi}_{c}((x^{m(p)})^{-1}x^{m(r)}+\mathcal{I}_{dc})&= T((x^{m(p)})^{-1}x^{m(r)}).
	\end{split}
\end{equation*}
Putting everything together, we have
\begin{align}\label{eqn9.019}
		\sum_{p,r=1}^{n}\left \langle  \widehat{\phi}_{c}((\xi_p+\mathcal{I}_{dc})^*(\xi_r+\mathcal{I}_{dc}))y_r, y_p \right \rangle
		&=\sum_{p,r=1}^{n}\left \langle  \widehat{\phi}_{c}((x^{m(p)})^{-1}x^{m(r)}+\mathcal{I}_{dc})y'_r, y'_p \right \rangle \notag \\
		&=\sum_{p,r=1}^{n}\left \langle  T((x^{m(p)})^{-1}x^{m(r)})h(x^{m(r)}), h(x^{m(p)}) \right \rangle,
	\end{align}
	which is non-negative due to Theorem \ref{DCIII} since $\underline{T}$ satisfies the Brehmer's positivity condition. Hence, $\widehat{\phi}_{c}$ is a completely positive map on the operator system $S_{dc}$. It follows from Proposition \ref{prop_cp_cb} that $\widehat{\phi}_{c}$ is completely bounded and so,  
\begin{equation}\label{eqn8.25}
	\|\widehat{\phi}_{c}\|=\|\widehat{\phi}_{c}(e+\mathcal{I}_{dc})\|=1.
\end{equation}
Since the norm closure of $S_{dc}$ is $\mathcal{A}_{dc}$, we can extend the map $\widehat{\phi}_{c}$ continuously to $\mathcal{A}_{dc}$.  Indeed, if $f+\mathcal{I}_{dc} \in \mathcal{A}_{dc}$, then $f \in C^*(\mathbb{F}_k)$ and so, there exists a sequence $\{f_n\}_{n \in \N} \subseteq \C[\mathbb{F}_k]$ such that $\|f_n-f\|_{C^*(\mathbb{F}_k)} \to 0$ as $n\to \infty$. It follows from the definition of the quotient norm that 
\[
\|(f_n+\mathcal{I}_{dc})-(f+\mathcal{I}_{dc})\|_{\mathcal{A}_{dc}} \leq \|f_n-f\|_{C^*(\mathbb{F}_k)} \to 0 \quad \text{as} \ n \to \infty.
\]
Indeed, $\{f_n+\mathcal{I}_{dc}\}$ is a Cauchy sequence in $\mathcal{A}_{dc}$. Moreover, we have 
\[
\|\widehat{\phi}_{c}(f_n+\mathcal{I}_{dc})-\widehat{\phi}_{c}(f_m+\mathcal{I}_{dc})\|=\|\widehat{\phi}_{c}(f_n-f_m+\mathcal{I}_{dc})\| \leq \|f_n-f_m+\mathcal{I}_{dc}\| \to 0 \quad \text{as} \ n, m \to \infty,
\]
where the last inequality follows from (\ref{eqn8.25}). Thus $\{\widehat{\phi}_{c}(f_n+\mathcal{I}_{dc})\}$ is a Cauchy sequence in the Banach space $\mathcal{B}(\HS)$ and so, there is a unique operator, say $\widehat{\phi}_{c}(f+\mathcal{I}_{dc})$,  acting on $\HS$ such that 
\[
\|\widehat{\phi}_{c}(f+\mathcal{I}_{dc})-\widehat{\phi}_{c}(f_n+\mathcal{I}_{dc})\| \to 0 \quad \text{as} \ n \to \infty. 
\]
Indeed, by limiting criterion, we get 
\begin{equation}\label{eqn8.26}
	\|\widehat{\phi}_{c}(f+\mathcal{I}_{dc})\|=\lim_{n \to \infty}\|\widehat{\phi}_{c}(f_n+\mathcal{I}_{dc})\| \leq \lim_{n \to \infty} \|f_n+\mathcal{I}_{dc}\|_{\mathcal{A}_{dc}} = \|f+\mathcal{I}_{dc}\|_{\mathcal{A}_{dc}}.
\end{equation}
It follows from (\ref{eqn8.26}) and the uniqueness of the limit $\widehat{\phi}_{c}(f+\mathcal{I}_{dc})$ for every $f+\mathcal{I}_{dc}$ in $\mathcal{A}_{dc}$ that $\widehat{\phi}_{c}$ extends to a contractive linear map on $\mathcal{A}_{dc}$. By continuity arguments, $\widehat{\phi}_{c}$ extends to a unital completely positive map on $\mathcal{A}_{dc}$. It now follows from Theorem \ref{Stine} that there is a larger Hilbert space $\mathcal{K}$ containing $\mathcal{H}$ and a unital $*$-algebra homomorphism 
$
\psi:\mathcal{A}_{dc} \to \mathcal{B}(\mathcal{K})$ such that
$\widehat{\phi}_{c}(a)=P_{\mathcal{H}}\psi(a)|_\mathcal{H}$, for every $ a \in \mathcal{A}_{dc}$. Define $U_i=\psi(s_i+\mathcal{I}_{dc})$ for $i=1, \dotsc, k$. Since $\psi$ is an algebra $*$-homomorphism, $(U_1, \dotsc, U_k)$ is a tuple of unitaries such that 
$
U_iU_j=q_{ij}U_jU_i$ and $U_iU_j^{-1}=q_{ij}^{-1}U_j^{-1}U_i$, for $1 \leq i < j \leq k$. Moreover, for every $m_1, \dotsc, m_k \in \mathbb{Z}$, we have 
\begin{equation*}
	\begin{split} 
		P_\mathcal{ H }U_1^{m_1}\dotsc U_k^{m_k}|_\mathcal{ H }
		&=P_\mathcal{ H }\psi(s_1^{m_1}\dotsc s_k^{m_k} + \mathcal I_{dc})|_\mathcal{H} \\
		&=\widehat{\phi}_{c}(s_1^{m_1}\dotsc s_k^{m_k} + \mathcal I_{dc})\\
		&=\underset{1\leq i<j \leq k}\prod q_{ij}^{-m_{i}^+m_{j}^-}\left[(T_{1}^{m_1^-})^*\dotsc (T_{k}^{m_k^-})^*\right]\left[T_{1}^{m_1^+}\dotsc T_{k}^{m_k^+}\right].
	\end{split}
\end{equation*} 
Therefore, $\underline{T}$ admits a regular $q$-unitary dilation and the proof is complete.
\end{proof}

Below we find a few classes of $q$-commuting contractions with $\|q\|=1$ which satisfy the Brehmer's positivity condition.

\begin{lem}\label{lem908}
		Let $\underline{T}=(T_1, \dotsc, T_k)$ be a $q$-commuting tuple of contractions with $\|q\|=1$ on a Hilbert space $\HS$. For $ u \subseteq \{1, \dotsc, k\}$, we have that $S(u)\geq 0$ (with $S(u)$ as in \eqref{Brehmer's}) in each of the following cases:
		\begin{enumerate}
			\item[(i)] $\underline{T}$ consists of isometries;
			\item[(ii)] $\underline{T}$ consists of doubly $q$-commuting contractions;
			\item[(iii)] $\|T_1h\|^2 + \dotsc +\|T_kh\|^2  \leq \|h\|^2$ for all $h \in \HS$.
		\end{enumerate}
	\end{lem}
	
	\begin{proof}
			Let $\underline{T}$ be a $q$-commuting tuple of isometries. Then 
		\[
		S(u)=\sum_{v \subset u }(-1)^{|v|}T(x^{e(v)})^*T(x^{e(v)})=\underset{v \subset u}\sum(-1)^{|v|}I \geq 0,
		\]
		where $T(x^{e(v)})$ is as in Theorem \ref{DCIII}. The last inequality follows because $\underset{v \subset u}\sum(-1)^{|v|}$ is either $0$ or $1$ depending on the choice of $u$. Let $(T_1, \dotsc, T_k)$ be a doubly $q$-commuting tuple of contractions. We note that Proposition 3.2 in \cite{Barik} establishes that a doubly $q$-commuting tuple of contractions satisfies Szeg\"{o} positivity, where the adjoints are considered on the right, in contrast to Brehmer's positivity condition, which involves adjoints on the left. Nonetheless, the underlying idea of the proof remains the same. We briefly discuss the proof here for the sake of completeness. Let $v=\{n_1,, \dotsc, n_m\}$ be any non-empty subset of $\{1,\dotsc, k\}$ with $n_1 < \dotsc < n_m$. Then $T(x^{e(v)})^*T(x^{e(v)})$
		\begin{equation*}
			\begin{split}
				&=(T_{n_1}^{e_{n_{1}}(v)}T_{n_2}^{e_{n_{2}}(v)}\dotsc T_{n_m}^{e_{n_m}(v)})^*(T_{n_1}^{e_{n_{1}}(v)}T_{n_2}^{e_{n_{2}}(v)}\dotsc T_{n_m}^{e_{n_m}(v)})\\
				&=(T_{n_m}^{e_{n_m}(v)})^*\dotsc (T_{n_2}^{e_{n_{2}}(v)})^* (T_{n_1}^{e_{n_{1}}(v)})^*T_{n_1}^{e_{n_{1}}(v)}T_{n_2}^{e_{n_{2}}(v)}\dotsc T_{n_m}^{e_{n_m}(v)}\\
				&=\prod_{1 \leq i <m}q_{n_{m}n_{i}}^{e_{n_m}(v)e_{n_{i}}(v)}(T_{n_{m-1}}^{e_{n_{m-1}}(v)})^*\dotsc (T_{n_2}^{e_{n_{2}}(v)})^* (T_{n_1}^{e_{n_{1}}(v)})^*(T_{n_m}^{e_{n_m}(v)})^*T_{n_1}^{e_{n_{1}}(v)}T_{n_2}^{e_{n_{2}}(v)}\dotsc T_{n_m}^{e_{n_m}(v)}\\
				&=\prod_{1 \leq i <m}q_{n_{m}n_{i}}^{e_{n_m}(v)e_{n_{i}}(v)}\prod_{1 \leq i <m}q_{n_{m}n_i}^{-e_{n_m}(v)e_{n_{i}}(v)}(T_{n_{m-1}}^{e_{n_{m-1}}(v)})^*\dotsc (T_{n_1}^{e_{n_{1}}(v)})^*T_{n_1}^{e_{n_{1}}(v)}T_{n_2}^{e_{n_{2}}(v)}\dotsc (T_{n_m}^{e_{n_m}(v)})^*T_{n_m}^{e_{n_m}(v)}\\
				&=\bigg[(T_{n_{m-1}}^{e_{n_{m-1}}(v)})^*\dotsc (T_{n_2}^{e_{n_{2}}(v)})^* (T_{n_1}^{e_{n_{1}}(v)})^*T_{n_1}^{e_{n_{1}}(v)}T_{n_2}^{e_{n_{2}}(v)}\dotsc T_{n_{m-1}}^{e_{n_{m-1}}(v)}\bigg] (T_{n_m}^{e_{n_m}(v)})^*T_{n_m}^{e_{n_m}(v)}\\
				&=\vdots \ \quad \mbox{(continuing doing the previous steps for $i=n_1, \dotsc, n_{m-1}$)}\\
				&=\prod_{1 \leq i \leq m}(T_{n_{i}}^{e_{n_{i}}(v)})^*(T_{n_{i}}^{e_{n_{i}}(v)})\\
				&=\prod_{\omega \in v}T_{\omega}^*T_{\omega}.
			\end{split}
		\end{equation*}
		Thus for any subset $u$ of $\{1, \dotsc, k\},$ we have that 
		\begin{equation}\label{eqn819}
			\begin{split}
				S(u)=\sum_{v \subset u}(-1)^{|v|}T(x^{e(v)})^*T(x^{e(v)})
				=\sum_{v \subset u}(-1)^{|v|}\prod_{\omega \in v}T_{\omega}^*T_{\omega}.\\
			\end{split}
		\end{equation}	 
		The product $\prod_{\omega \in v}T_{\omega}^*T_{\omega}$ is well-defined because for any doubly $q$-commuting pair $(A,B)$ with $|q|=1$ the pair $(A^*A, B^*B)$ is a doubly commuting pair. Finally (\ref{eqn819}) gives that
		\[
		S(u)=\sum_{v \subset u}(-1)^{|v|}\prod_{\omega \in v}T_{\omega}^*T_{\omega}=\prod_{\omega \in u}(I-T_{\omega}^*T_{\omega}).
		\]
		Since $I-T_{\omega}^*T_{\omega} (\omega \in u)$ commute with each other and are non-negative, we have that $S(u)\geq 0$.
		
		\medskip	
		
		Let $\|T_1h\|^2 + \dotsc +\|T_kh\|^2  \leq \|h\|^2$ for all $h \in \HS$. We follow the proof of Proposition 2 in \cite{Attele}. Let $u=\{\omega_1, \dotsc ,\omega_r\}$ and write $T_i$ in place of $T_{\omega_i}$ for the ease of computations. For $0 \leq p \leq r$ and $h \in \mathcal{H}$, let us define  
		\[
		a_p(h)=\sum_{\substack{v \subset u \\ |v|=p}}\|T(x^{e(v)})h\|^2. 
		\]
		Take any permutation $\sigma$ on $\{1, \dotsc, k\}$. Then 
		\begin{equation*}
			\begin{split}
				T\left(q^{m_0}s_{\sigma(1)}^{m_{\sigma(1)}}\dotsc s_{\sigma(k)}^{m_{\sigma(k)}}\right)
				&=q^{m_0}\underset{1\leq i<j \leq k}\prod q_{\sigma(i)\sigma(j)}^{-m_{\sigma(i)}^+m_{\sigma(j)}^-}\left[(T_{\sigma(1)}^{m_{\sigma(1)}^-})^*\dotsc (T_{\sigma(k)}^{m_{\sigma(k)}^-})^*\right]\left[T_{\sigma(1)}^{m_{\sigma(1)}^+}\dotsc T_{\sigma(k)}^{m_{\sigma(k)}^+}\right]\\
				&=\underset{1\leq i<j \leq k}\prod q_{ij}^{\alpha_i\beta_j}\bigg( \underset{1\leq i<j \leq k}\prod q_{ij}^{-m_{i}^+m_{j}^-} \, q^{m_0}\left[(T_{1}^{m_1^-})^*\dotsc (T_{k}^{m_k^-})^*\right]\left[T_{1}^{m_1^+}\dotsc T_{k}^{m_k^+}\right]\bigg)\\
				&=\underset{1\leq i<j \leq k}\prod q_{ij}^{\alpha_i\beta_j}T(q^{m_0}s_1^{m_1}\dotsc s_k^{m_k})\\	
			\end{split}
		\end{equation*} 
		for some $\alpha_i, \beta_j \in \Z$. Therefore, we have for all $h \in \HS$ that
		\[
		\left\|T(q^{m_0}s_1^{m_1}\dotsc s_k^{m_k})h\right\|=\left\|T(q^{m_0}s_{\sigma(1)}^{m_{\sigma(1)}}\dotsc s_{\sigma(k)}^{m_{\sigma(k)}})h\right\|.
		\]	
		Let $p \in \{1, \dotsc, r\}$ and let $h \in \HS$. Some routine computations give that
		\begin{equation*}
			\begin{split}
				\overset{r}{\underset{j=1}{\sum}}  \sum_{\substack{v \subset u \\ |v|=p-1}}\|T_jT(x^{e(v)})h\|^2
				&= \overset{r}{\underset{j=1}{\sum}} \quad  \sum_{\substack{n_1+\dotsc+n_r=p-1 \\ n_1, \dotsc, n_r\in \{0,1\}  }}\|T_jT_1^{n_1}\dotsc T_j^{n_j}\dotsc T_r^{n_r}h\|^2\\
				& =\overset{r}{\underset{j=1}{\sum}} \quad  \sum_{\substack{n_1+\dotsc+n_r=p-1 \\ n_1, \dotsc, n_r\in \{0,1\}  }}\|T_1^{n_1}\dotsc T_j^{n_j+1}\dotsc T_r^{n_r}h\|^2\\
				& \geq \overset{r}{\underset{j=1}{\sum}} \quad  \sum_{\substack{n_1+\dotsc+n_r=p-1 \\ n_1, \dotsc, n_r\in \{0,1\}, n_j =0  }}\|T_1^{n_1}\dotsc T_j^{n_j+1} \dotsc T_r^{n_r}h\|^2\\
				&\geq  \sum_{\substack{v \subset u \\ |v|=p}}\|T(x^{e(v)})h\|^2\\
				&=a_p(h).
			\end{split}
		\end{equation*}
		Consequently, we have 
		\begin{equation*}
			\begin{split}
				a_p(h)=\sum_{\substack{v \subset u \\ |v|=p}}\|T(x^{e(v)})h\|^2
				\leq  \overset{r}{\underset{j=1}{\sum}} \left( \sum_{\substack{v \subset u \\ |v|=p-1}}\|T_jT(x^{e(v)})h\|^2\right)
				& =   \sum_{\substack{v \subset u \\ |v|=p-1}} \left(\overset{r}{\underset{j=1}{\sum}}\|T_jT(x^{e(v)})h\|^2\right)\\		
				& \leq   \sum_{\substack{v \subset u \\ |v|=p-1}} \|T(x^{e(v)})h\|^2\\		
				&=a_{p-1}(h),
			\end{split}
		\end{equation*}
		and hence
		\begin{equation*}
			\begin{split}
				\langle S(u)h,h\rangle =\sum_{v \subset u}(-1)^{|v|}\|T(x^{e(v)})h\|^2=\overset{r}{\underset{p=0}{\sum}}(-1)^pa_{p}(h)
				\geq a_0(h)-a_1(h)
				=\|h\|^2-\sum_{i=1}^r\|T_ih\|^2
				\geq 0.\\
			\end{split}
		\end{equation*}
		The proof is now complete. 
	\end{proof}
	
	Next, we present a proof of Theorem \ref{dilation of DCV} for a finite family of $q$-commuting contractions with $\|q\|=1$.
	
	\begin{thm}\label{dilation of DCIV}
		Let  $\underline{T}=(T_1, \dotsc, T_k)$ be a $q$-commuting tuple of contractions with $\|q\|=1$ acting on a Hilbert space $\mathcal{H}$. Then $\underline{T}$ admits a regular $q$-unitary dilation in each of the cases below:
		\begin{enumerate}
			\item $\underline{T}$ consists of isometries;
			\item $\underline{T}$ consists of doubly $q$-commuting contractions;
			\item $\|T_1h\|^2+\dotsc + \|T_kh\|^2 \leq \|h\|^2$ for all $h \in \HS$.
		\end{enumerate}
	\end{thm}
	
	\begin{proof}
For each of the classes as in the statement of the theorem, $\underline{T}$ satisfies Brehmer's positivity condition which follows from Lemma \ref{lem908}. The desired conclusion follows from Theorem \ref{regular q-unitary dilationII}.	
	\end{proof}
	
	We now present an analog of von Neumann's inequality for the aforementioned classes of $q$-commuting contractions with $\|q\|=1$. Recall that $\mathcal{A}_{dc}=C^*(\mathbb{F}_k)\slash \mathcal{I}_{dc}$ is a unital $C^*$-algebra as discussed in the proof of Theorem \ref{regular q-unitary dilationII}.
	\begin{thm}\label{thm_vNIII}
		Let $\underline{T}=(T_1, \dotsc, T_k)$ be a $q$-commuting tuple of contractions with $\|q\|=1$ acting on a Hilbert space $\HS$ such that one of the following holds:
		\begin{enumerate}
			\item $\underline{T}$ consists of isometries;
			\item $\underline{T}$ consists of doubly $q$-commuting contractions;
			\item $\|T_1h\|^2+\dotsc + \|T_kh\|^2 \leq \|h\|^2$ for all $h \in \HS$.
		\end{enumerate}
Then for every $f_{ij} \in \mathcal{A}_{dc}$ and $n \in \N$, we have 
		\[
		\left\|\left[\widehat{\phi}_{c}(f_{ij})\right]_{i,j=1}^n\right\|_{M_n(\mathcal{B}(\HS))} \leq 	\left\|\left[f_{ij}\right]_{i,j=1}^n\right\|_{M_n(\mathcal{A}_{dc})},
		\]
where  $\widehat{\phi}_{c}$ is the map as in $($\ref{eqn:phi2}$)$. 		
	\end{thm}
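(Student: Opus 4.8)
The plan is to recognize that the stated matrix-norm inequality is precisely the assertion that $\widehat{\phi}_c$ is completely contractive on $\mathcal{A}_c$: once we know $\|\widehat{\phi}_c\|_{cb}\le 1$, then for $[f_{ij}]\in M_n(\mathcal{A}_c)$ we get $\big\|[\widehat{\phi}_c(f_{ij})]\big\|=\big\|(\widehat{\phi}_c)_n([f_{ij}])\big\|\le\|\widehat{\phi}_c\|_{cb}\,\big\|[f_{ij}]\big\|\le\big\|[f_{ij}]\big\|$. So the entire task reduces to establishing complete contractivity of $\widehat{\phi}_c$, and this follows the same route as Theorem \ref{von_Neumann} (which handled the doubly $q$-commuting case via $\phi_{dc}$).

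First I would note that in each of the three cases the tuple $\underline{T}=(T_1,\dots,T_k)$ satisfies Brehmer's positivity: for doubly $q$-commuting contractions this is Proposition \ref{Brehmer's positivity}, and for a tuple of isometries or one with $\|T_1\|^2+\dots+\|T_k\|^2\le 1$ this is Lemma \ref{lem908}. Hence Theorem \ref{DCIII} applies and the operator-valued function $T$ of (\ref{Map2}) is positive definite on $G_{dc}$. Next I would invoke the reduction already carried out in Remark \ref{remII}: formula (\ref{eqn9.019}) rewrites, for arbitrary $\xi_1+\mathcal{I}_c,\dots,\xi_n+\mathcal{I}_c$ in $S_c$ and $y_1,\dots,y_n$ in $\HS$, the sum $\sum_{p,r}\langle\widehat{\phi}_c((\xi_p+\mathcal{I}_c)^*(\xi_r+\mathcal{I}_c))y_r,y_p\rangle$ as $\sum_{p,r}\langle T((x^{m(p)})^{-1}x^{m(r)})h(x^{m(r)}),h(x^{m(p)})\rangle$ for a suitable $h\in c_{00}(G_{dc},\HS)$. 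Positive definiteness of $T$ makes this nonnegative, so by Lemma 3.13 of \cite{Paulsen} the unital map $\widehat{\phi}_c\colon S_c\to\mathcal{B}(\HS)$ is completely positive on the operator system $S_c$. Proposition \ref{prop_cp_cb} then gives that $\widehat{\phi}_c$ is completely bounded with $\|\widehat{\phi}_c\|_{cb}=\|\widehat{\phi}_c(e+\mathcal{I}_c)\|=\|I\|=1$, i.e. completely contractive on $S_c$.

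Finally, since $S_c$ is norm-dense in $\mathcal{A}_c$ (because $\mathbb{C}[\mathbb{F}_k]$ is dense in $C^*(\mathbb{F}_k)$), I would run the same continuity argument used in the proof of Theorem \ref{dilation of DCII}: using the quotient-norm estimate $\|f_n-f_m+\mathcal{I}_c\|_{\mathcal{A}_c}\le\|f_n-f_m\|_{C^*(\mathbb{F}_k)}$, a Cauchy sequence in $S_c$ converging to $f+\mathcal{I}_c\in\mathcal{A}_c$ is sent by $\widehat{\phi}_c$ to a Cauchy sequence in $\mathcal{B}(\HS)$, so $\widehat{\phi}_c$ extends uniquely to a bounded linear map on $\mathcal{A}_c$; passing to limits in the defining inequalities (each $(\widehat{\phi}_c)_n$ is continuous) shows the extension is again completely positive, hence completely contractive on $\mathcal{A}_c$. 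Applying $(\widehat{\phi}_c)_n$ to $[f_{ij}]\in M_n(\mathcal{A}_c)$ then yields the claimed inequality. The only point that genuinely requires care—and hence the main obstacle—is this density/continuity step: one must verify that the extension of $\widehat{\phi}_c$ from the operator system $S_c$ to all of $\mathcal{A}_c$ preserves complete positivity, since nearly all the substantive work (positive definiteness of $T$ through the long computation of Section \ref{sec09}, and the algebraic reduction (\ref{eqn9.019})) is already in hand.
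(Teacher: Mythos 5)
Your proposal is correct and follows essentially the same route as the paper: Brehmer's positivity in each of the three cases gives positive definiteness of the function $T$ in (\ref{Map2}) via Theorem \ref{DCIII}, the identity (\ref{eqn9.019}) then makes $\widehat{\phi}_c$ completely positive on $S_c$, Proposition \ref{prop_cp_cb} upgrades this to complete contractivity, and the density/continuity extension to $\mathcal{A}_c$ is exactly the argument the paper borrows from Theorem \ref{dilation of DCII} inside the proof of Theorem \ref{dilation of DCIV}. Your extra attention to verifying that the extension preserves complete positivity is a fair elaboration of the paper's brief ``continuity arguments,'' not a departure from them.
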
 
	
	\begin{proof}
		It follows from the proof of Theorem \ref{regular q-unitary dilationII} that the operator-valued map $\widehat{\phi}_{c}$ on  $S_{dc}$ given by
		\begin{equation*}
			\widehat{\phi}_{c}: S_{dc} \to \mathcal{B}(\HS), \quad \alpha x^m+\mathcal{I}_{dc} \mapsto \alpha T(x^m), 
		\end{equation*}
		is a completely contractive map. By continuity argument, it follows that $\widehat{\phi}_{c}$ is a completely contractive map on $\mathcal{A}_{dc}$, which completes the proof.  
	\end{proof}

Let $\underline{T}=(T_1, \dotsc, T_k)$ be a $q$-commuting tuple of contractions with $\|q\|=1$, belonging to one of the classes described in Theorem \ref{thm_vNIII}. We follow the same notations as in Theorems \ref{regular q-unitary dilationII} and \ref{thm_vNIII}. If each $q_{ij}=1$, then $\underline{T}$ is a commuting tuple. In this case, one replaces the free group $\mathbb{F}_k$ with $\mathbb{Z}^k$ in the proof of Theorem \ref{regular q-unitary dilationII}. The closed two-sided ideal $\mathcal{I}_{dc}$ becomes $\{0\}$. Thus, the $*$-algebra $S_{dc}=(\mathbb{C}[\mathbb{F}_k]+\mathcal{I}_{dc})\slash \mathcal{I}_{dc}$ is simply the commutative group algebra $\mathbb{C}[\Z^k]$. Its completion $\mathcal{A}_{dc}$ is the group $C^*$-algebra $C^*(\Z^k)$. Let $\{e_1, \dotsc, e_k\}$ be the standard generators in $\Z^k$. By Theorem 2.5.5 in \cite{PutnamII}, there is an isometric isomorphism $\varphi: \mathcal{A}_{dc} \to C(\T^k)$ with $\varphi(e_j)=z_j$ for $1 \leq j \leq k$. Hence, $\widehat{\phi}_c(\varphi^{-1}(p))=p(T_1, \dotsc, T_k)$ for every polynomial $p$ in $k$-variables. By Theorem \ref{thm_vNIII}, we have
\[
\|p(T_1, \dotsc, T_k)\|=\|\widehat{\phi}_c(\varphi^{-1}(p))\| \leq \|\varphi^{-1}(p)\|_{\mathcal{A}_{dc}}=\|p\|_{\infty, \T^k}
\]
for every polynomial $p$ in $k$-variables, which is precisely the multivariate von Neumann's inequality in the commutative setting.
	
	\section{Regular $q$-unitary dilation: The general case}\label{sec10}
	
	\noindent In this Section, we settle the proofs of our main results, Theorems \ref{thm912} and \ref{dilation of DCV} for any $q$-commuting family of contractions with $\|q\|=1$, i.e., the general case. Following the proofs of Theorems \ref{DCIII} and \ref{regular q-unitary dilation}, one can expect to extend these two theorems in the setting where the unimodular scalars are replaced by commuting unitaries. So, we first establish these theorems in the general framework.
	
\subsection{Regular $\widetilde{Q}$-unitary dilation for $Q$-commuting contractions} For extending Theorems \ref{DCIII} and \ref{regular q-unitary dilation} to a $Q$-commuting family, we need to associate a group with that family. We construct such a group following Lemma \ref{group}. Let  $\{T_\alpha: \alpha \in \Lambda\}$ be a $Q$-commuting family of contractions acting on a Hilbert space $\HS$ equipped with an order, say  $`` \preceq "$ on $\Lambda$. Consider the set given by 
	\begin{small}\begin{equation*}
			G_{\Lambda}=\left\{ x^m=\underset{1\leq i < j \leq k}{\prod}q_{\alpha_i \alpha_j}^{m_{\alpha_i\alpha_j}}s_{\alpha_1}^{m_{\alpha_1}}\dotsc s_{\alpha_k}^{m_{\alpha_k}}\ : \ \alpha_1 \preceq \dotsc \preceq \alpha_k \ \text{in $\Lambda$}, \ \  m_{\alpha_i \alpha_j}, \ m_{\alpha_1},\dotsc, m_{\alpha_k} \in \Z \ \ \text{and} \ \ k \in \N \cup \{0\} \right\}.
		\end{equation*}
	\end{small}
	We assume that the indeterminates $q_{\alpha_i\alpha_j}$ commute with $q_{\beta_i \beta_j}$ as well as with every $s_{\alpha_k}$ and we have
	\begin{align*}
		s_{\alpha_i}s_{\alpha_j}=\left\{
		\begin{array}{ll}
			q_{\alpha_i\alpha_j}s_{\alpha_j}s_{\alpha_i}, & \alpha_i \preceq \alpha_j\\
			q_{\alpha_j\alpha_i}^{-1}s_{\alpha_j}s_{\alpha_i}, & \alpha_j \preceq \alpha_i 
		\end{array} 
		\right.,  
		\quad 
		s_{\alpha_i}s_{\alpha_j}^{-1}=\left\{
		\begin{array}{ll}
			q_{\alpha_i \alpha_j}^{-1}s_{\alpha_j}^{-1}s_{\alpha_i}, & \alpha_i \preceq \alpha_j\\
			q_{\alpha_j \alpha_i}s_{\alpha_j}^{-1}s_{\alpha_i}, & \alpha_j \preceq \alpha_i 
		\end{array} 
		\right.
	\end{align*}
	for every $\alpha_i, \alpha_j$ in $\Lambda$ with $\alpha_i \ne \alpha_j$. Following the similar computations as in Lemma \ref{group}, one can easily show that $G_\Lambda$ is a group. With not much difficulty, one can obtain the next result by simply following the proof of Theorem \ref{DCIII}.
	
	\begin{thm}\label{thm_506}
		Let  $\mathcal{T}=\{T_\alpha: \alpha \in \Lambda\}$ be a $Q$-commuting family of contractions acting on a Hilbert space $\HS$. The map $T: G_{\Lambda} \to \mathcal{B}(\mathcal{H})$ defined by 
		\begin{small} 	\begin{align*}
				\begin{split} 
					T\bigg(\underset{1\leq i < j \leq k}{\prod}q_{\alpha_i \alpha_j}^{m_{\alpha_i\alpha_j}}s_{\alpha_1}^{m_{\alpha_1}}\dotsc s_{\alpha_k}^{m_{\alpha_k}}\bigg)
					=\underset{1\leq i<j \leq k}{\prod}Q_{\alpha_i\alpha_j}^{m_{\alpha_i \alpha_j}}\underset{1\leq i<j \leq k}\prod Q_{\alpha_i \alpha_j}^{-m_{\alpha_ i}^+m_{\alpha_ j}^-}\left[(T_{\alpha_1}^{m_{\alpha_1}^-})^*\dotsc (T_{\alpha_k}^{m_{\alpha_k}^-})^*\right]\left[T_{\alpha_1}^{m_{\alpha_1}^+}\dotsc T_{\alpha_k}^{m_{\alpha_k}^+}\right]
				\end{split}
			\end{align*} 
		\end{small}	
		is positive definite if and only if the family $\mathcal{T}$ satisfies the Brehmer's positivity condition. 
	\end{thm}

	\begin{defn} 
		A $Q$-commuting family $\{T_\alpha: \alpha \in \Lambda\}$ of contractions on a Hilbert space $\HS$ is said to have a \textit{regular $\widetilde{Q}$-unitary dilation} if there exist a Hilbert space $\mathcal
		{K} \supseteq \HS$ and a $\widetilde{Q}$-commuting family $\mathcal{U}=\{U_\alpha : \alpha\in \Lambda \}$ of unitaries on $\mathcal{K}$ such that $\widetilde{Q}_{\alpha \beta}|_{\HS}=Q_{\alpha \beta}$ for all $\alpha, \beta \in \Lambda$ with $\alpha \ne \beta$ and 
		\[
		\underset{1\leq i<j \leq k}\prod Q_{\alpha_{i}\alpha_{j}}^{-m_{\alpha_{i}}^+m_{\alpha_{j}}^-}\left[(T_{\alpha_{1}}^{m_{\alpha_{1}}^-})^*\dotsc (T_{\alpha_{k}}^{m_{\alpha_{k}}^-})^*\right]\left[T_{\alpha_{1}}^{m_{\alpha_{1}}^+}\dotsc T_{\alpha_{k}}^{m_{\alpha_{k}}^+}\right]=P_\mathcal{H}U_{\alpha_{1}}^{m_{\alpha_{1}}}\dotsc U_{\alpha_{k}}^{m_{\alpha_{k}}}|_\mathcal{H}
		\]
		for every $m_{\alpha_1}, \dotsc, m_{\alpha_k} \in \mathbb{Z}$ and $\alpha_1, \dotsc, \alpha_k \in \Lambda$ with $\alpha_1 \preceq \dotsc \preceq \alpha_k$.		
	\end{defn}
	
Now we present the desired generalization of Theorem \ref{regular q-unitary dilation}.
	
	\begin{thm}\label{thm_408}
		A $Q$-commuting family of contractions  $\mathcal{T}=\{T_\alpha: \alpha \in \Lambda\}$ acting on a Hilbert space $\HS$ has a regular $\widetilde{Q}$-unitary dilation if and only if it satisfies the Brehmer's positivity condition.
	\end{thm}
	
	\begin{proof}
In the proof of Theorem \ref{regular q-unitary dilation}, if we replace the group $G_{dc}$ by $G_\Lambda$ and the associated operator-valued function on $G_{dc}$ by the operator-valued map $T$ on $G_\Lambda$ as in Theorem \ref{thm_506}, then similar computations as in Theorem \ref{regular q-unitary dilation} lead to the conclusion.	
	\end{proof}
	
	As an application of Theorem \ref{thm_408}, we have the following result. 
	
	\begin{thm}
		A $Q$-commuting family of contractions $\mathcal{T}=\{T_\alpha : \alpha \in \Lambda\}$ acting on a Hilbert space $\HS$ admits a regular $\widetilde{Q}$-unitary dilation in each of the following cases:
		\begin{enumerate}
			\item $\mathcal{T}$ consists of isometries;
			\item $\mathcal{T}$ consists of doubly $Q$-commuting contractions;
			\item $\mathcal{T}$ is a countable family and $\sum_{\alpha \in \Lambda} \|T_{\alpha}h\|^2 \leq \|h\|^2$ for all $h \in \HS$. 
		\end{enumerate}	
		Moreover, if $\mathcal{T}$ consists of isometries, then there exists a  $\widetilde{Q}$-commuting family $\mathcal{U}=\{U_\alpha : \alpha \in \Lambda\}$ of unitaries on a larger space $\mathcal{K}$ containing $\HS$ such that $V_\alpha=U_\alpha|_{\HS}$ for every $\alpha \in \Lambda$.
	\end{thm}
	
	\begin{proof}
		Each of the classes mentioned in the statement of the theorem satisfies Brehmer's positivity condition and its proof runs along the same lines as Lemma \ref{lem908}. The desired conclusion now follows from Theorem \ref{thm_408}.
	\end{proof}

	We now prove our main results, i.e., Theorem \ref{thm912} and Theorem \ref{dilation of DCV} in general setting.

	\medskip

	\noindent \textbf{Proof of Theorem \ref{thm912}.} Let $\mathcal{T}=\{T_\alpha : \alpha \in \Lambda\}$ be a $q$-commuting family of contractions with $\|q\|=1$ acting on a Hilbert space $\HS$. The part $(3) \implies (2)$ follows from Theorem \ref{thm_408} and $(1) \implies (3)$ follows trivially. It suffices to prove the part $(2) \implies (1)$. Assume that $S(u) \geq 0$ for every finite subset $u$ of $\Lambda$.  Put an order ``$\preceq$'' on $\Lambda$ so that $(\Lambda, \preceq)$ is well-ordered.  Let $\mathbb{F}_{\infty}$ be the free group of reduced words consisting of letters $\{s_\alpha, s_\alpha^{-1}: \alpha \in \Lambda\}$. Each element in the group $\mathbb{F}_\infty$ is a word consisting of finitely many $s_\alpha, s_\alpha^{-1}$ and the empty word $e$ is the identity element. Let $\mathcal{I}_{\infty}$ be the closure of the two sided closed ideal in the group $C^*$-algebra $C^*(\mathbb{F}_\infty)$ which is generated by elements of the form 
	$	s_\alpha s_\beta-q_{\alpha \beta}s_\beta s_\alpha$ and $s_\alpha s_\beta^{-1}-q_{\alpha \beta}^{-1}s_\beta^{-1} s_\alpha$ for $\alpha, \beta \in \Lambda$ with $\alpha \ne \beta$.	One can employ the similar arguments as in the proof of Theorem \ref{regular q-unitary dilationII} for the quotient group $C^*$-algebra $\mathcal{A}_{\infty}=C^*(\mathbb{F}_\infty)\slash \mathcal{I}_{\infty}$ to obtain the regular $q$-unitary dilation of $\mathcal{T}$. We define $S_{\infty}$ to be the operator system of $\mathcal{A}_{\infty}$ given by
	\[
	S_{\infty}:=(\mathbb{C}[\mathbb{F}_\infty]+\mathcal{I}_{\infty})\slash \mathcal{I}_{\infty}.
	\] 
	The unit element of $S_{\infty}$ is $e+\mathcal{I}_{\infty}$ and $S_{\infty}$ is a dense $*$-subalgebra of $\mathcal{A}_{\infty}$. An element in $S_{\infty}$ can be uniquely written as a finite linear combination of the elements of the form
	
	\begin{equation}\label{eqn920}
		e+\mathcal{I}_\infty, \quad  s_{i_1}^{m_{i_1}}s_{i_2}^{m_{i_2}}\dotsc s_{i_t}^{m_{i_t}}+\mathcal{I}_{\infty}
	\end{equation}
	for $\{i_1, \dotsc, i_t\} \subset \Lambda$ with $i_1 \preceq \dotsc \preceq i_t$ and $\{m_{i_1}, \dotsc, m_{i_t}\} \subset \Z \setminus \{0\}$. Let us define
	\begin{equation}
		\widehat{\phi}_{\infty}: S_{\infty} \to \mathcal{B}(\HS), \quad 	\left(\alpha_0 e+\underset{1 \leq t \leq \ell}{\sum}\alpha_t s_{i_1}^{m_{i_1}}\dotsc s_{i_t}^{m_{i_t}}\right)+\mathcal{I}_{\infty} \mapsto \alpha_0I+\underset{1 \leq t \leq \ell}{\sum}\alpha_t T(s_{i_1}^{m_{i_1}}\dotsc s_{i_t}^{m_{i_t}}), 
	\end{equation}
	where 
	\[
	T(s_{i_1}^{m_{i_1}}\dotsc s_{i_t}^{m_{i_t}})= \underset{1\leq \alpha < \beta \leq t}\prod q_{i_\alpha i_\beta}^{-m_{i_\alpha}^+m_{i_\beta}^-}\left[(T_{i_1}^{m_{i_1}^-})^*\dotsc (T_{i_t}^{m_{i_t}^-})^*\right]\left[T_{i_1}^{m_{i_1}^+}\dotsc T_{i_t}^{m_{i_t}^+}\right]
	\]
	which is same as the map in (\ref{Map2}) corresponding to the finite tuple $(T_{i_1}, \dotsc, T_{i_t})$ in $\mathcal{T}$. It follows from Theorem \ref{DCIII} that $T$ is a positive definite function since $(T_{i_1}, \dotsc, T_{i_t})$ in $\mathcal{T}$ satisfies Brehmer's positivity condition. Following the similar arguments as in the proof of Theorem \ref{regular q-unitary dilationII} for $\widehat{\phi}_{c}$, we have that $\widehat{\phi}_\infty$ has an extension to a completely positive map on $\mathcal{A}_\infty$. Indeed, the same arguments as in the proof of Theorem \ref{regular q-unitary dilationII} work, because one has to deal over the finite sums at any step of the computations. Finally, we apply Stinespring's dilation theorem (see Theorem \ref{Stine}) to obtain a Hilbert space $\mathcal{K} \supseteq \mathcal{H}$ and a unital $*$-algebra homomorphism $\psi:\mathcal{A}_{\infty} \to \mathcal{B}(\mathcal{K})$ such that $\widehat{\phi}_{\infty}(a)=P_{\mathcal{H}}\psi(a)|_\mathcal{H}$ for every $ a \in \mathcal{A}_{\infty}$. Let $U_\alpha=\psi(s_\alpha+\mathcal{I}_{\infty})$ for  $\alpha \in \Lambda$. Consequently, we have a $q$-commuting family $\mathcal{U}=\{U_\alpha : \alpha\in \Lambda\}$ of unitaries acting on $\mathcal{K}$ such that
	\begin{equation}\label{eqn922}
		\underset{1\leq i<j \leq k}\prod q_{\alpha_{i}\alpha_{j}}^{-m_{\alpha_{i}}^+m_{\alpha_{j}}^-}\left[(T_{\alpha_{1}}^{m_{\alpha_{1}}^-})^*\dotsc (T_{\alpha_{k}}^{m_{\alpha_{k}}^-})^*\right]\left[T_{\alpha_{1}}^{m_{\alpha_{1}}^+}\dotsc T_{\alpha_{k}}^{m_{\alpha_{k}}^+}\right]=P_\mathcal{H}U_{\alpha_{1}}^{m_{\alpha_{1}}}\dotsc U_{\alpha_{k}}^{m_{\alpha_{k}}}|_\mathcal{H}
	\end{equation}
	for every $m_{\alpha_1}, \dotsc, m_{\alpha_k} \in \mathbb{Z}$ and $\alpha_1, \dotsc, \alpha_k \in \Lambda$ with $\alpha_1 \preceq \dotsc \preceq \alpha_k$. The proof is now complete. \qed 
	
	\medskip
	
	\noindent \textbf{Proof of Theorem \ref{dilation of DCV}.} Let $\mathcal{T}=\{T_\alpha : \alpha \in \Lambda\}$ be a $q$-commuting family of contractions  with $\|q\|=1$ such that $\mathcal{T}$ belongs to one of the classes as in the statement of Theorem \ref{dilation of DCV}. It follows from Lemma \ref{lem908} that in either case, $\mathcal{T}$ satisfies Brehmer's positivity condition. Indeed, the same arguments apply because one must address the finite sums at every step of the calculations. Consequently,  the desired conclusion follows from Theorem \ref{thm912}. \qed
	
	\medskip As an application of Theorems \ref{thm912} and \ref{dilation of DCV}, we  obtain dilation results for $q$-commuting family of contractions with $\|q\|=1$ and thereby generalizing some existing results from the literature.  
	
	\begin{defn}
		Let $\mathcal{T}=\{T_\alpha : \alpha \in \Lambda\}$ be a $q$-commuting family of contractions with $\|q\|=1$ acting on a Hilbert space $\mathcal{H}$. We say $\mathcal{T}$ admits a \textit{$q$-unitary dilation} if there exist a Hilbert space $\mathcal
		{K} \supseteq \HS$ and a $q$-commuting family $\mathcal{U}=\{U_\alpha : \alpha\in \Lambda\}$ of unitaries acting on $\mathcal{K}$ such that
		\begin{equation}\label{eqn924}
			T_{\alpha_{1}}^{m_{\alpha_{1}}}\dotsc T_{\alpha_{k}}^{m_{\alpha_{k}}}=P_\mathcal{H}U_{\alpha_{1}}^{m_{\alpha_{1}}}\dotsc U_{\alpha_{k}}^{m_{\alpha_{k}}}|_\mathcal{H}
		\end{equation}
		for every $m_{\alpha_1}, \dotsc, m_{\alpha_k} \in \mathbb{N}\cup \{0\}$ and $\alpha_1, \dotsc, \alpha_k \in \Lambda$ with $\alpha_1 \preceq \dotsc \preceq \alpha_k$. We say $\mathcal{T}$ admits a \textit{$q$-unitary extension} if each $U_\alpha$ is an extension of $T_\alpha$.
	\end{defn}
	
	It is evident that (\ref{eqn924}) is invariant under any permutation $\sigma$ on $\{\alpha_1, \dotsc, \alpha_k\}$ since the operators on both sides in \eqref{eqn924} follow the same $q$-intertwining relations. Consequently, \eqref{eqn924} is equivalent to saying that $	T_{\alpha_{1}}^{m_{\alpha_{1}}}\dotsc T_{\alpha_{k}}^{m_{\alpha_{k}}}=P_\mathcal{H}U_{\alpha_{1}}^{m_{\alpha_{1}}}\dotsc U_{\alpha_{k}}^{m_{\alpha_{k}}}|_\mathcal{H}$ for every $m_{\alpha_1}, \dotsc, m_{\alpha_k} \in \mathbb{N}\cup \{0\}$ and $\alpha_1, \dotsc, \alpha_k$ in $\Lambda$. Next, we have the following result which is a direct consequence of Theorem \ref{dilation of DCV}.
	
	\begin{thm}
		\label{dilation of DCVI}
		Let $\mathcal{T}=\{T_\alpha : \alpha \in \Lambda\}$ be a $q$-commuting family of contractions with $\|q\|=1$ acting on a Hilbert space $\HS$. Then $\mathcal{T}$ admits a $q$-unitary dilation in each of the cases given below:
		\begin{enumerate}
			\item $\mathcal{T}$ consists of isometries;
			\item $\mathcal{T}$ consists of doubly $q$-commuting contractions;
			\item $\mathcal{T}$ is a countable family and $\sum_{\alpha \in \Lambda} \|T_{\alpha}h\|^2 \leq \|h\|^2$ for all $h \in \HS$. 
		\end{enumerate}
	\end{thm}
	
	The authors of \cite{Barik} proved that a $q$-commuting tuple of contractions with $\|q\|=1$ dilates to a $q$-commuting tuple of isometries if it satisfies the positivity conditions as in \eqref{Szego}. We provide a generalization of this result to any $q$-commuting family.
	
	\begin{cor}\label{cor_403}
		Let $\mathcal{T}=\{T_\alpha : \alpha \in \Lambda\}$ be a $q$-commuting family of contractions with $\|q\|=1$ acting on a Hilbert space $\HS$ such that
		\[
		\underset{\{\alpha_1, \dotsc, \alpha_k\} \subset u}{\sum}(-1)^k(T_{\alpha_1}\dotsc T_{\alpha_k})(T_{\alpha_1}\dotsc T_{\alpha_k})^* \geq 0
		\]
		for every finite subset $u$ of $\Lambda$. Then $\mathcal{T}$ admits a $q$-commuting unitary dilation.
	\end{cor}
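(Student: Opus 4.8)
The plan is to reduce the statement to Theorem \ref{thm912} by passing to the adjoint family $\mathcal{T}^{*}=\{T_{\alpha}^{*}:\alpha\in\Lambda\}$. First I would record that $\mathcal{T}^{*}$ is again $q$-commuting with the \emph{same} parameter family $q$: taking adjoints in $T_{\alpha}T_{\beta}=q_{\alpha\beta}T_{\beta}T_{\alpha}$ gives $T_{\beta}^{*}T_{\alpha}^{*}=\overline{q}_{\alpha\beta}T_{\alpha}^{*}T_{\beta}^{*}$, and since $\overline{q}_{\alpha\beta}=q_{\alpha\beta}^{-1}=q_{\beta\alpha}$ this rearranges to $T_{\alpha}^{*}T_{\beta}^{*}=q_{\alpha\beta}T_{\beta}^{*}T_{\alpha}^{*}$; moreover $\|q\|=1$ still holds. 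So $\mathcal{T}^{*}$ is a $q$-commuting family of contractions with $\|q\|=1$.

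Next I would check that the hypothesis of the corollary is precisely Brehmer's positivity (in the sense of Theorem \ref{thm912}) for $\mathcal{T}^{*}$. For a finite subset $v=\{\alpha_{1},\dotsc,\alpha_{m}\}$ of $\Lambda$, the generic summand in the Brehmer sum for $\mathcal{T}^{*}$ is $(T_{\alpha_{1}}^{*}\dotsc T_{\alpha_{m}}^{*})^{*}(T_{\alpha_{1}}^{*}\dotsc T_{\alpha_{m}}^{*})=(T_{\alpha_{m}}\dotsc T_{\alpha_{1}})(T_{\alpha_{m}}\dotsc T_{\alpha_{1}})^{*}$. Since reordering a product of $q$-commuting operators only introduces a unimodular scalar, which cancels against its conjugate, $(T_{\alpha_{m}}\dotsc T_{\alpha_{1}})(T_{\alpha_{m}}\dotsc T_{\alpha_{1}})^{*}=(T_{\alpha_{1}}\dotsc T_{\alpha_{m}})(T_{\alpha_{1}}\dotsc T_{\alpha_{m}})^{*}$; this summand is therefore independent of the chosen ordering, and the Brehmer sum for $\mathcal{T}^{*}$ over all subsets of a finite set $u$ coincides with the left-hand side of the displayed hypothesis. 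Hence $\mathcal{T}^{*}$ satisfies Brehmer's positivity.

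By Theorem \ref{thm912} applied to $\mathcal{T}^{*}$, there are a Hilbert space $\mathcal{K}\supseteq\HS$ and a $q$-commuting family $\mathcal{V}=\{V_{\alpha}:\alpha\in\Lambda\}$ of unitaries on $\mathcal{K}$ realizing a regular $q$-unitary dilation of $\mathcal{T}^{*}$. Specializing the defining identity to integers $m_{\alpha_{i}}=-n_{\alpha_{i}}$ with $n_{\alpha_{i}}\geq 0$ makes every $q$-prefactor equal to $1$ and every $(T_{\alpha_{i}}^{*})^{m_{\alpha_{i}}^{+}}=I$, so it collapses to
\[
T_{\alpha_{1}}^{n_{\alpha_{1}}}\dotsc T_{\alpha_{k}}^{n_{\alpha_{k}}}=P_{\HS}V_{\alpha_{1}}^{-n_{\alpha_{1}}}\dotsc V_{\alpha_{k}}^{-n_{\alpha_{k}}}\big|_{\HS}.
\]
Finally I would put $U_{\alpha}:=V_{\alpha}^{*}$; then $V_{\alpha}^{-n}=U_{\alpha}^{n}$ and the order of the factors is preserved, while taking adjoints in $V_{\alpha}V_{\beta}=q_{\alpha\beta}V_{\beta}V_{\alpha}$ gives $U_{\alpha}U_{\beta}=q_{\alpha\beta}U_{\beta}U_{\alpha}$, so $\mathcal{U}=\{U_{\alpha}:\alpha\in\Lambda\}$ is a $q$-commuting family of unitaries on $\mathcal{K}$ with $T_{\alpha_{1}}^{n_{\alpha_{1}}}\dotsc T_{\alpha_{k}}^{n_{\alpha_{k}}}=P_{\HS}U_{\alpha_{1}}^{n_{\alpha_{1}}}\dotsc U_{\alpha_{k}}^{n_{\alpha_{k}}}|_{\HS}$ for all $n_{\alpha_{i}}\in\N\cup\{0\}$ and all $\alpha_{1},\dotsc,\alpha_{k}$ in $\Lambda$; that is, $\mathcal{U}$ is a $q$-unitary dilation of $\mathcal{T}$. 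The only real subtlety — and the point I would be most careful about — is the scalar bookkeeping: that the adjoint family carries the \emph{same} parameters $q$ rather than $\overline{q}$, that the reordering-invariance of $AA^{*}$ makes the stated hypothesis literally equal to Brehmer's positivity for $\mathcal{T}^{*}$, and that passing back to $V_{\alpha}^{*}$ again produces $q$-commutativity rather than $\overline{q}$-commutativity.
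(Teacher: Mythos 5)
Your proposal is correct and follows essentially the same route as the paper: pass to the adjoint family $\mathcal{T}^{*}$, observe it is $q$-commuting with the same $q$ and that the stated positivity is exactly Brehmer's condition for $\mathcal{T}^{*}$, and then invoke Theorem \ref{thm912}. You merely make explicit the final conversion (specializing to nonpositive exponents and taking $U_{\alpha}=V_{\alpha}^{*}$) that the paper leaves implicit.
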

	
	\begin{proof}
		Note that $T_\alpha T_\beta=q_{\alpha \beta}T_\beta T_\alpha$ if and only if $T_\alpha^* T_\beta^*=q_{\alpha \beta}T_\beta^* T_\alpha^*$. Consequently, the family $\mathcal{T}^*=\{T_\alpha^* : \alpha \in \Lambda \}$ is also a $q$-commuting family of contractions with $\|q\|=1$ that satisfies Brehmer's positivity condition. The desired conclusion now follows from Theorem \ref{thm912}.
	\end{proof}	
	
	It is well-known that any commuting family of isometries admits a simultaneous unitary extension. For a proof, one may refer to Proposition 6.2 in CH-I of \cite{Nagy}. An analogous result for a doubly $q$-commuting and $q$-commuting tuple of isometries with $\|q\|=1$ was established in Theorem 6.2 of \cite{Jeu} and Theorem 6.1 of \cite{Ball}, respectively. We generalize this result to an arbitrary family of $q$-commuting isometries with $\|q\|=1$ and conclude the article.
	
	\begin{cor}\label{cor_q_iso_ext}
		Let $\mathcal{V}=\{V_\alpha : \alpha \in \Lambda\}$ be a $q$-commuting family of isometries with $\|q\|=1$ acting on a Hilbert space $\HS$. Then $\mathcal{V}$ admits a $q$-unitary extension.
	\end{cor}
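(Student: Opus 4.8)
The plan is to reduce the statement to the $q$-unitary dilation of a $q$-commuting family of isometries already obtained in Theorem~\ref{dilation of DCVI}, and then to upgrade that dilation to an extension by the classical isometry argument.

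\emph{Step 1: Produce a $q$-unitary dilation.} Since $\mathcal{V}=\{V_\alpha:\alpha\in\Lambda\}$ consists of isometries, Theorem~\ref{dilation of DCVI}(i) applies and furnishes a Hilbert space $\mathcal{K}\supseteq\HS$ together with a $q$-commuting family $\mathcal{U}=\{U_\alpha:\alpha\in\Lambda\}$ of unitaries on $\mathcal{K}$, obeying the same relations $U_\alpha U_\beta=q_{\alpha\beta}U_\beta U_\alpha$ as $\mathcal{V}$, such that
\[
V_{\alpha_1}^{m_{\alpha_1}}\dotsc V_{\alpha_k}^{m_{\alpha_k}}=P_\HS U_{\alpha_1}^{m_{\alpha_1}}\dotsc U_{\alpha_k}^{m_{\alpha_k}}|_\HS
\]
for all $m_{\alpha_1},\dotsc,m_{\alpha_k}\in\N\cup\{0\}$ and $\alpha_1,\dotsc,\alpha_k\in\Lambda$. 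Specializing to a single index with exponent $1$ yields $V_\alpha=P_\HS U_\alpha|_\HS$ for every $\alpha$, which is the only instance of the dilation identity I will actually need.

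\emph{Step 2: Invariance of $\HS$.} I would next verify that each $U_\alpha$ leaves $\HS$ invariant. Fix $\alpha\in\Lambda$ and $h\in\HS$. As $U_\alpha$ is unitary and $V_\alpha$ is an isometry,
\[
\|U_\alpha h\|^2=\|h\|^2=\|V_\alpha h\|^2=\|P_\HS U_\alpha h\|^2 .
\]
Together with the orthogonal decomposition $\|U_\alpha h\|^2=\|P_\HS U_\alpha h\|^2+\|(I-P_\HS)U_\alpha h\|^2$, this forces $(I-P_\HS)U_\alpha h=0$, so $U_\alpha h=P_\HS U_\alpha h=V_\alpha h\in\HS$. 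Hence $\HS$ is invariant under every $U_\alpha$ and $U_\alpha|_\HS=V_\alpha$.

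\emph{Step 3: Conclusion.} Since $\HS$ is invariant under each $U_\alpha$ with $U_\alpha|_\HS=V_\alpha$, the family $\mathcal{U}$ is a $q$-commuting family of unitaries on $\mathcal{K}\supseteq\HS$ that restricts to $\mathcal{V}$ on $\HS$, that is, a $q$-unitary extension of $\mathcal{V}$; the relation $V_\alpha V_\beta=q_{\alpha\beta}V_\beta V_\alpha$ is then just the restriction to $\HS$ of $U_\alpha U_\beta=q_{\alpha\beta}U_\beta U_\alpha$, so the $q$-intertwining data are preserved. The only step that requires any care is the invariance argument in Step~2, and it is precisely there that the hypothesis that $\mathcal{V}$ consists of isometries is essential; beyond invoking Theorem~\ref{dilation of DCVI} no further obstacle arises.
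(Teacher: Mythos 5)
Your proposal is correct and follows essentially the same route as the paper: invoke Theorem~\ref{dilation of DCVI}(i) to get a $q$-commuting unitary dilation with $V_\alpha=P_\HS U_\alpha|_\HS$, and then use the fact that both $V_\alpha$ and $U_\alpha$ are isometries to conclude $U_\alpha|_\HS=V_\alpha$. Your Step~2 merely spells out the norm computation the paper leaves implicit, so there is no substantive difference.
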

	
	\begin{proof}
		We have by Theorem \ref{dilation of DCVI} that there is a Hilbert space $\mathcal{K} \supseteq \HS$ and a $q$-commuting family $\mathcal{U}=\{U_\alpha : \alpha\in \Lambda\}$ of unitaries acting on $\mathcal{K}$ such that $V_\alpha=P_\HS U_\alpha|_\HS$ for every $\alpha \in \Lambda$.  Since $V_\alpha$ and $U_\alpha$ both are isometries, we have that $V_\alpha=U_\alpha|_\HS$ for every $\alpha \in \Lambda$. The proof is complete. 
	\end{proof}
	
	\section{Declarations}
	
	\noindent (1) Data sharing is not applicable to this article as no datasets were
	generated or analysed during the current study.

	\smallskip
	
	\noindent (2) There are no competing interests.
	
	\vspace{0.2cm}
	
	\noindent \textbf{Acknowledgement.} The first named author was	supported in part by the ``Core Research Grant" of Anusandhan National Research Foundation (ANRF) with Grant No. CRG/2023/005223 of Govt. of India. The second named author was supported by the Ph.D. Fellowship of Council of Scientific and Industrial Research (CSIR), Govt. of India. The third named author was supported by the Prime Minister's Research Fellowship (PMRF ID 1300140), Govt. of India.

	\vspace{0.3cm}


\begin{thebibliography}{9}
		
		\bibitem{Agler} J. Agler, \textit{The Arveson extension theorem and coanalytic models}, Integral Equations Operator Theory, 5 (1982), 608 -- 631.
		
		\medskip 
		
		\bibitem{Ando} T. Ando, \textit{On a pair of commutative contractions}, Acta Sci. Math. (Szeged), 24 (1963), 88 -- 90.
		
		\medskip
		
		\bibitem{Ando 2} T. Ando, \textit{Unitary dilation for a triple of commuting contractions}, Bull. Acad. Polon. Sci. Sér. Sci. Math. Astronom. Phys., 24 (1976), 851 -- 853.
		
		\medskip
		
		\bibitem{Attele}
		K. R. M. Attele and A. R. Lubin, \textit{Dilations and commutant lifting for jointly isometric operators - a geometric approach}, J. Funct. Anal., 140 (1996), 300 -- 311.		
		
		\medskip 

\bibitem{Bal:Tim:Tre} J. A. Ball, W. S. Li, D. Timotin and T. T. Trent, \textit{A commutant lifting theorem on the polydisc}, Indiana Univ. Math. J., 48 (1999), 653 -- 675.

\medskip 
		
		\bibitem{Ball}
		J. A. Ball and H. Sau, \textit{Models for $q$-commutative tuples of isometries}, arXiv: https://arxiv.org/abs/2207.01278.
		
		\medskip 
		
		   \bibitem{Bal:Tre:Vin} J. Ball, T. Trent and V. Vinnikov, \textit{Interpolation and commutant lifting for multipliers on reproducing kernel Hilbert spaces}, Operator theory and analysis, Oper. Theory Adv. Appl., 122, Birkh\"{a}user, Basel, (2001), 89 -- 138.
		
		\medskip 
		
		\bibitem{Barik}
		S. Barik and B. Bisai, \textit{A generalization of Ando’s dilation, and isometric dilations for a class of tuples of $q$-commuting contractions}, Complex Anal. Oper. Theory, 18 (2024), no. 6, Paper No. 131, 29 pp. 
		
		\medskip
		
		
		\bibitem{Nagy} H. Bercovici, C. Foias, L. Kerchy and B. Sz.-Nagy, \textit{Harmonic analysis of operators on Hilbert space}, Universitext Springer, New York, 2010.
		
		\medskip
		
		\bibitem{BCL} 
		C. A. Berger, L. A. Coburn and A. Lebow, \textit{Representation and index theory for $C^*$-algebras generated by commuting isometries}, J. Funct. Anal., 27 (1978), 51 -- 99.
		
		\medskip
		
		\bibitem{Bis:Pal:Sah} B. Bisai, S. Pal and P. Sahasrabuddhe, \textit{On $q$-commuting co-extensions and $q$-commutant lifting}, Linear Algebra Appl., 658 (2023), 186 -- 205.
		
		\medskip
		
		\bibitem{Brehmer}
		S. Brehmer, \textit{\"{U}ber vetauschbare Kontraktionen des Hilbertschen Raumes}, Acta Sci. Math. (Szeged), 22 (1961), 106 -- 111.
		
		\medskip
		
		\bibitem{Cur:Vas 1} R. Curto and F. H. Vasilescu, \textit{Standard operator models in the polydisc}, Indiana Univ. Math. J., 42 (1993), 791 -- 810.
		
		\medskip 
		
		\bibitem{Cur:Vas 2} R. Curto and F. H. Vasilescu, \textit{Standard operator models in the polydisc II.}, Indiana Univ. Math. J., 44 (1995), 727 -- 746.
		
		\medskip 
		
		
		\bibitem{Dey}
		S. Dey, \textit{Standard dilations of $q$-commuting tuples}, Colloq. Math., 107 (2007), 141 -- 165.
		
		\medskip 
		
		
		\bibitem{DeyI} S. Dey, \textit{Standard commuting dilations and liftings}, Colloq. Math., 126 (2012), 87 -- 94.
		
		\medskip
		
		\bibitem{foias} C. Foias and B. Sz.-Nagy, \textit{Analyse harmonique des op$\acute{e}$rateurs de l'espace de Hilbert}, Akademiai Kiado, Budapest, 1967.
		
		\medskip
		
		\bibitem{foias-2} C. Foias and B. Sz.-Nagy, \textit{Forme triangulaire d'une contraction et factorisation de la fonction caract$\acute{e}$ristique}, Acta Sci. Math. (Szeged), 28 (1967), 201 -- 212.
		
		\medskip 
		
		\bibitem{Fuglede}
		B. Fuglede, \textit{A commutativity theorem for normal operators}, Proc. Nat. Acad. Sci., 36 (1950), 35 -- 40.		
		
		\medskip
		
		\bibitem{Halp}
		I. Halperin, \textit{Sz.-Nagy-Brehmer dilations}, Acta Sci. Math. (Szeged), 23 (1962), 279 -- 289.
		
		\medskip 
		
		\bibitem{Jeu}
		M. de Jeu and P. R. Pinto, \textit{The structure of doubly non-commuting isometries}, Adv. Math., 368 (2020), 107149, 35 pp. \medskip 	
		
		\bibitem{K.M.} D. K. Keshari and N. Mallick, \textit{$q$-commuting dilation}, Proc. Amer. Math. Soc., 147 (2019), 655 -- 669.
		
		\medskip
				
		
		\bibitem{Shalit} 
		E. Levy and O. M. Shalit, \textit{Dilation theory in finite dimensions : the possible, the impossible and the unknown}, Rocky Mountain J. Math., 44 (2014), 203 -- 221. 
		
		\medskip
		
		
		\bibitem{sumesh} N. Mallick and K. Sumesh, \textit{On a generalization of Ando's dilation theorem}, Acta Sci. Math. (Szeged),  86 (2020), 273 -- 286.
		
		\medskip
		
		\bibitem{Murphy}
		G. J. Murphy, \textit{$C^*$-algebras and operator theory}, Academic Press, 2014.
		
		\medskip
		
		
		\bibitem{NagyII}
		B. Sz.-Nagy, \textit{Sur les contractions de l'espace de Hilbert}, Acta Sci. Math (Szeged), 15 (1953), 87 -- 92.
		
		\medskip
		
		\bibitem{NaimarkI}
		M. A. Naimark, \textit{Positive definite operator functions on a commutative group}, Bull. Acad. Sci. URSS. S\'{e}r. Math. [Izvestia Akad. Nauk SSSR], 7 (1943), 237 -- 244.
		\medskip
		
		\bibitem{Ope} D. Op$\check{e}$la, \textit{A generalization of And$\hat{o}$’s theorem and Parrott’s example}, Proc. Amer. Math. Soc., 134 (2006), 2703 -- 2710.
		
		\medskip
		
				\bibitem{SP1}
		S. Pal and P. Sahasrabuddhe, \textit{Minimal isometric dilations and operator models
			for the polydisc}, Proceedings of the Royal Society of Edinburgh: Section A Mathematics, 40 pp, In Press, DOI:10.1017/prm.2024.95.
		
		\medskip 
		
		\bibitem{SP2}
		S. Pal and P. Sahasrabuddhe, \textit{Minimal unitary dilations for commuting contractions}, arXiv: https://arxiv.org/abs/2205.09093. 
		
		\medskip 
		
		
	\bibitem{Pal}
		S. Pal, P. Sahasrabuddhe and N. Tomar, \textit{Theory of $Q$-commuting contractions: joint reducing subspaces and orthogonal decompositions}, Infin. Dimens. Anal. Quantum Probab. Relat. Top, 27 pp, In Press, DOI:https://doi.org/10.1142/S0219025725500109.
		
		\medskip
		
		
		\bibitem{Par} S. Parrott, \textit{Unitary dilations for commuting contractions}, Pacific J. Math., 34 (1970), 481 -- 490.
		
		\medskip
		
		
		\bibitem{Paulsen}
		V. Paulsen, \textit{Completely bounded maps and operator algebras}, Cambridge University Press, 2003.
		
		\medskip
		
		\bibitem{PutnamII}
		I. F. Putnam, \textit{Lecture notes on $C^*$-algebras},  Univ. Victoria, 2016.
		
		\medskip
		
		\bibitem{Sebestyen} Z. Sebesty$\acute{e}$n, \textit{Anticommutant lifting and anticommuting dilation}, Proc. Amer. Math. Soc., 121 (1994), 133 -- 136.
		
		\medskip
		
		\bibitem{Sebestyen1} Z. Sebesty$\acute{e}$n, \textit{Lifting intertwining operators}, Period. Math. Hungar., 28 (1994), 235 -- 240. 
		
		\medskip
		
		
		\bibitem{Stine}
		W. F. Stinespring, \textit{Positive functions on $C^*$-algebras}, Proc. Amer. Math. Soc., 6 (1955), 211 -- 216.
		
		\medskip
		
		\bibitem{Stochel}
		J. Stochel and F. H. Szafraniec, \textit{Unitary dilation of several contractions}, In: Oper. Theor. Adv. Appl. 127, pp. 585 -- 598, Birkhau\"{s}er, 2001.
		
		\medskip
		
		\bibitem{vN} J. von Neumann, \textit{Eine Spektraltheorie f\"{u}r allgemeine Operatoren eines unit\"{a}ren Raumes}, Math. Nachr., 4 (1951), 258 -- 281.
	\end{thebibliography}
\end{document}